\DeclarePairedDelimiter\abs{\lvert}{\rvert}%
\DeclarePairedDelimiter\norm{\lVert}{\lVert}%
\DeclareMathOperator\arccosh{arccosh}
\theoremstyle{plain}
\newtheorem{theorem}{Theorem}
\newtheorem{proposition}{Proposition}
\newtheorem{corollary}{Corollary}
\newtheorem{lemma}[theorem]{Lemma}
\theoremstyle{definition}
\newtheorem{example}{Example}
\theoremstyle{remark}
\newtheorem{remark}{Remark}
\newcommand{\titlesize}{\fontsize{15}{20pt}\bfseries}
\newcommand{\titulo}[1]{\vspace*{10pt}\begin{center}
               \titlesize{#1}\par\vspace*{10pt}\normalfont}
\newcommand{\autor}[1]{\vspace*{11pt}\sc{#1}\par}
\def\affilnum#1{${}^{#1}$}
\def\affil#1{${}^{#1}$}
\newcommand{\direccion}[1]{\vspace*{12pt}\footnotesize
                \textit{#1}\end{center}\par\vspace*{20pt}}
\begin{document}
 
\thispagestyle{plain}

\titulo 
{A new insight on positivity and contractivity of the Crank-Nicolson scheme for the 
 heat equation\footnote[1]{Research project PID2019-109045GB-C31 funded by Agencia Estatal de Investigaci\'on, Ministerio de Ciencia e Innovaci\'on, Spain.}}

\autor{I. Higueras\affil{1},
\underline{T. Rold\'an}\affil{1}}

\direccion{\affilnum{1}Institute for Advanced Materials and Mathematics\\
Public University of Navarre\\
E-mails: {\tt higueras@unavarra.es, teo@unavarra.es}.}

\begin{abstract}
In this paper we study numerical positivity and contractivity in the infinite norm of Crank-Nicolson method when it is applied to the diffusion equation with homogeneous Dirichlet boundary conditions.  For this purpose, the amplification matrices are written in terms of three kinds of Chebyshev-like polynomials, and necessary and sufficient bounds to  preserve the desired qualitative properties are obtained. For each spatial mesh, we provide the equations that must be solved as well as the intervals that contain these bounds; consequently, they can be easily obtained by a bisection process. Besides, differences between numerical positivity and contractivity are highlighted. This problem has also been addressed by some other authors in the literature and some known results are recovered in our study. Our approach gives a new insight on the problem that completes the panorama and that can be used to study qualitative properties for other problems.
 \end{abstract}

\textit{Keywords:  {Positivity, Maximum norm contractivity, Monotonicity, Crank-Nicolson, Heat equation}}

\section{Introduction}
We consider the numerical solution of the one-dimensional heat equation
\begin{subequations}\label{heat_eq}
\renewcommand{\theequation}{\theparentequation.\arabic{equation}}
\begin{align}
\frac{\partial u(t,x)}{\partial t}&= d\, \frac{\partial^2 u(t,x)}{\partial x^2}\,, \quad x\in[0,1], \, t\geq 0\,, \label{heat_eqEDP}
\\[0.5ex]
u(0,t)&=u(1,t)=0\, ,  \quad  t\geq 0, 
\label{heat_cond}
\\[0.5ex]
u(x,0)& =u_0(x)\, , \quad x\in[0,1]\, .  \label{heat_IC}
\end{align}
\end{subequations}
where $u_0$ in \eqref{heat_IC} is a given 
function on $[0,1]$. 
For the sake of simplicity, we consider the diffusion term $d=1$ in the rest of the paper.  
Solutions $u(x,t)$ for the linear parabolic problem \eqref{heat_eqEDP}-\eqref{heat_IC} have several qualitative properties that are relevant in the context of the physical model. In particular, the problem is positivity preserving, that is,  for   $t\geq 0$,
\begin{equation}\label{edp_pos}
u_0(x)\geq 0 \quad \Rightarrow \quad u(x, t)\geq 0\,  ,
\end{equation}
and  the solutions are  monotonically decreasing in time, i.e.,  for $t_2\geq t_1\geq 0$, 
\begin{equation} \label{edp_cont}
\max_{0\leq x \leq 1}|u(t_2,x)|\leq \max_{0\leq x \leq 1}|u(t_1,x)|\, .
 \end{equation}
 In order to obtain numerical approximations with physical sense, properties \eqref{edp_pos}-\eqref{edp_cont} should be preserved in the discretization process. In this paper, we consider the Crank-Nicolson (CN) method, a method of lines approach where second order central finite differences in space are followed by a second order time-stepping method.  
Spatial discretization  of  \eqref{heat_eqEDP}-\eqref{heat_IC}  with second order  central finite differences  and mesh width $h=1/(m+1)$, gives a semi-discrete linear differential system  of the form 
\begin{equation} \label{EDO_heat2}
w'(t)= B_h w(t)\, , \qquad w(0)=w_0\, , \,  t\geq 0\, , 
\end{equation}
where $B_h$ is a matrix of dimension $m$, that is positivity preserving with monotonically decreasing (in the maximum norm)  solutions  (see section \ref{sec:CN} for details). 
Next, a time stepping method is used to obtain numerical approximations $w_n\approx w(t_n)$, where $t_n=n \tau$, and $\tau$ is the constant time stepsize used. In this paper, we consider approximations of the form
\begin{equation}\label{timeiteration}
w_n= A_m \, w_{n-1}\, , \, n\geq 0\, , 
\end{equation}
where $w_0$ is a known value, and $A_m$ is a matrix of dimension $m$ that depends on the time stepping method. In particular, for Runge-Kutta methods,  $A_m=\phi(\tau B_h)$, where $\phi$ is the stability function of the scheme. 

There is a vast list of references   in the analysis of positivity and monotonicity decreasing time stepping schemes  (see, e.g., \cite{shu1988total,thomee1990finite,kraaijevanger1991crk, kraaijevanger1992maximum,horvath1999maximum,ferracina2004stepsize,higueras2004strong,horvath2005positivity,farago2010non,gottlieb2011strong,nusslein2021positivity}; see too \cite{thomee1990finite,gottlieb2011strong,hundsdorfer2003nst} and the references therein). Depending on the context, monotonicity decreasing methods are also known as contractive, SSP (Strong Stability Preserving) or TVD (Total Variation Diminishing) schemes 
(see, e.g., \cite{spijker1985stepsize,shu1988total,kraaijevanger1991crk,higueras2004strong,ferracina2004stepsize,gottlieb2011strong}).  In this setting, stepsize restrictions of the form
\begin{equation}\label{ssp}
\tau\leq C \, \tau_{_{FE}}\, , 
\end{equation}
are obtained,
where $C$ denotes the 
monotonicity threshold factor (also known as SSP-coefficient, radius of absolute monotonicity, contractivity radius,   \ldots)
 of the time stepping method  {(see e.g.,\cite{kraaijevanger1991contractivity,van1986absolute,gottlieb2003strong,spijker1985stepsize,
ferracina2004stepsize})}, and $\tau_{_{FE}}$ is the stepsize restriction for the given qualitative property when forward Euler scheme is used to solve the  specific ODE problem  {(see, e.g. \cite[p. 379]{spijker1985stepsize}, \cite[p. 201]{higueras2004strong}, \cite[pp. 52-53]{gottlieb2011strong})}. In particular, for problem  \eqref{EDO_heat2}, $\tau_{_{FE}}= h^2/2$ for both positivity and contractivity  (see, e.g., \cite[p. 22]{thomee1990finite}), and $C=1$ for forward Euler method    (see section \ref{sec:CN} for details). Observe that with this approach, the  stepsize restriction \eqref{ssp} is the same for all linear systems with the same $\tau_{_{FE}}$  and for some problems this is not a  sharp bound.

A well known time stepping method is the  
$\theta$-method, defined as 
\begin{equation}\label{thetamethod}
w_{n}=(I-\theta\tau B_h)^{-1}(I+(1-\theta)\tau B_h)w_{n-1}\, ,\qquad \theta\in[0,1]\,.
\end{equation} 
Observe that this method 
can be understood as the composition of a  
$(1-\theta)\tau$-step with forward Euler method and a 
$\theta\tau$-step with backward Euler scheme. In particular, for    $\theta=1/2$, CN scheme is obtained.
The  radius of absolute monotonicity for the 
$\theta$-method applied to linear problems is $C_{\theta}=1/(1-\theta)$  \cite{higueras2019strong}. Consequently, numerical positivity and contractivity can be ensured under the restriction (see \cite[p. 135]{thomee1990finite})
\[
\frac{\tau}{h^2}\le \frac{1}{2(1-\theta)}\,.
\]
However, If we  look closer at the iteration matrix $A_m$ in \eqref{thetamethod}, a sharper  bound is possible. Different authors have studied numerical preservation of positivity and monotonicity for  problem  \eqref{heat_eqEDP}-\eqref{heat_IC} with the $\theta$-method
(see, e.g.,  \cite{kraaijevanger1992maximum,horvath1999maximum,farago2010non}).  In 
\cite[p. 72]{farago2010non}, 
the authors obtain  that the numerical solution is positive if and only if 
\begin{equation}\label{positive_inf}
\frac{\tau}{h^2}\le \frac{1-\sqrt{1-\theta}}{\theta(1-\theta)} \, , 
\end{equation}
whereas in 
\cite[Remark 7.1]{kraaijevanger1992maximum} and \cite[p. 456]{horvath1999maximum} it is shown that the numerical solution is contractive if and only if
\begin{equation}\label{contractive_inf}
\frac{\tau}{h^2}\le \frac{2-\theta}{4(1-\theta)^2} \, .
\end{equation}
On the following we will denote 
$s=\tau/h^2$ to the CFL coefficient.
The contractivity result in \cite{kraaijevanger1992maximum} is obtained for the pure initial value problem
\begin{align*}
\frac{\partial u(t,x)}{\partial t}&= d\, \frac{\partial^2 u(t,x)}{\partial x^2}\,, \quad x\in\mathbb{R}, \, t\geq 0\,, 
\\[0.5ex]
  u(x,0)&=u_0(x)\, , \quad x\in\mathbb{R}\, . 
\end{align*}
In particular,  bound \eqref{contractive_inf} for contractivity is valid for all $m\geq 1$ 
\cite[Theorem 4.1]{kraaijevanger1992maximum} \cite{horvath1999maximum}. The bound $\tau/h^2\leq 2/3$ has also been obtained in 
\cite[Theorem 1]{farago2002sharpening} in the analysis  of the stability of CN method. In   \cite{farago2010non}  and \cite{horvath1999maximum}  results are based on the shape of the inverse  matrix $(I-\theta\tau B_h)^{-1}$ of dimension $m$, whose entries can be expressed in terms of hyperbolic functions \cite{Rozsa};  stepsize restrictions are obtained for each value of $m$, and bound  \eqref{contractive_inf} is valid for all $m$.

\subsection*{Contributions of the paper}
The approach followed in this paper consists on the representation of  the iteration matrix $A_m$ in \eqref{timeiteration} for the CN method 
 in terms of  three classes of polynomials, $P_n(x)$, $C_n(x)$ and $U_n(x)$, defined by iterations  \eqref{chebyshev22},  \eqref{chebyshev22G} and \eqref{chebyshev22R}, respectively; in particular,  $U_n(x)$  are the Chebyshev polynomials of the second kind. 
In the three cases the iteration process is the same, 
but different initial values are considered.  This formalism gives a new insight on the problem that allows us to improve some results in the literature.

The contributions of this paper are the following ones. 
With regard to positivity, for any number of grid points $m$, we have obtained that:
\begin{enumerate}
\item Crank-Nicolson method is positive if and only if $s=\tau/h^2\in  (0, s_m^{(p)}]$, with $s_m^{(p)}=1/(\cosh \omega_m^{(p)}-1)$, where $\omega_m^{(p)}$ is the unique positive root of equation \eqref{wroot_pos}. This root lies in the narrow interval $(\log(2+\sqrt{2}), \log(2+\sqrt{3})]\approx (1.22795, 1.31696]$.  Thus $\omega_m^{(p)}$ can be easily computed by solving \eqref{wroot_pos}   with  bisection method. 
Proposition \ref{proposition:p(x)} shows the connection between the bounds $s_m^{(p)}$
and polynomials $P_n(x)$.
Some values of $s_m^{(p)}$ are shown in Table \ref{Tab:pos}.  
\item The sequence of bounds $(s_m^{(p)})$  is  strictly monotonically increasing  with all the terms 
in the   interval $[1, 2(2-\sqrt{2}))$.
As a consequence,  the CN method does not
preserve  positivity when the  spatial mesh is refined (keeping $s$ constant).  
\item In the limit case, when $m$ tends to infinite, we recover  the known bound, $s\lesssim  1.17$ for positivity (\cite[p. 126]{hundsdorfer2003nst},\cite[Table 1]{farago2010non}). 
\end{enumerate}

With regard to contractivity, we have computed the value 
$\|A_m(s)\|_{\infty}$ for any number of grid points $m$ (see Figure \ref{Fig:normaAm}), and we have obtained that:
\begin{enumerate}
\item Crank-Nicolson method is contractive if and only if $s=\tau/h^2\in  (0, s_m^{(c)}]$, with 
\mbox{$s_m^{(c)}=\infty$}  for $m=1,2,3$. For $m\geq 4$,  $s_m^{(c)}=1/(\cosh \omega_m^{(c)} -1)$, where $\omega_m^{(c)}$ is the unique positive root of equation \eqref{ineqodd_0} or \eqref{ineqeven_0}, depending on the parity of $m$.
This root lies in the  interval  $\big[\log \big((3+\sqrt{5}+\sqrt{\smash[b]{-2+6 \sqrt{5}}})/4\big), \log 3\big)\approx \left(\left.0.767197, 1.09861\right.\right]$.  Thus $\omega_m^{(c)}$ can be easily computed by solving \eqref{ineqodd_0} or \eqref{ineqeven_0}  with the bisection method. Some values of $s_m^{(p)}$ are shown in Tables \ref{Tab:Con_odd} and \ref{Tab:Con_even} for odd and even values of $m$, respectively.
Some of these values can also be seen in Figure   \ref{Fig:normaAm}.
\item $\|A_m(s)\|_{\infty}<1$ for $s\in  (0, s_m^{(c)})$ and 
$\|A_m(s_m^{(c)})\|_{\infty}=1$  (see Figure \ref{Fig:normaAm}), 
that resembles property \eqref{cont_Ah} of the linear system \eqref{EDO_heat}. 
\item  For $m\geq 4$,  the sequence $(s_m^{(c)})$  is  strictly monotonically decreasing  with all the terms 
in the   interval $\big(3/2, 1+\sqrt{5}\,\big]$.
As a consequence,   CN method
preserves  contractivity when the  spatial mesh is refined (keeping $s$ constant). 
\item In the limit case, when $m$ tends to infinite, we recover  the known bound, $\tau/h^2\le 3/2$ for contractivity \cite[Th. 4.1(Q3); Section 7.1]{kraaijevanger1992maximum}, \cite[Eq. (14)]{horvath1999maximum}. 
\end{enumerate}

The results in this paper complete and improve some results in the literature  \cite{farago2010non,horvath1999maximum}. From equations \eqref{wroot_pos} and \eqref{ineqodd_0}-\eqref{ineqeven_0}, and  the associated intervals, the computation of  $s_m^{(p)}$ and $s_m^{(c)}$ for any value of $m$ is straightforward with the bisection method. Besides, we obtain equations to compute bounds $s_m^{(c)}$ both for odd and even values of $m$, whereas in~\cite{horvath1999maximum}, bounds $s_m^{(c)}$ are only given for even $m$. 
From our approach we also get the
correct value for $s_3^{(c)}$.  Figure \ref{Fig:sequences} illustrates the differences between positivity and contractivity of CN method for the heat problem \eqref{heat_eq}: if the scheme is positive for a given grid mesh $m$, then it is also contractive for any grid mesh.

\subsection*{Scope of the paper}
The rest of the paper is organized as follows. In Section \ref{sec:CN}, we explain the CN discretization process; notation and definitions are also given in this section. 
In Section \ref{sec:main} we show the main results of the paper,
namely: Theorems  \ref{Th:pos} and \ref{Th:con}; Table  \ref{Tab:pos}, containing  upper bounds $s_m^{(p)}$ for positivity; Tables \ref{Tab:Con_odd} and \ref{Tab:Con_even} containing  upper bounds   $s_m^{(c)}$  for contractivity (odd and even case); and Figure \ref{Fig:sequences} showing sequences $(s_m^{(p)})$ and $(s_m^{(c)})$.  
An illustrative example is also given in Section \ref{sec:main}.
Section \ref{sec:conclusions} contains some conclusions and ideas for future work. The proof of main results are given in Section \ref{sec_proofs}. Previously,  some technical material,  needed for the proofs in Section \ref{sec_proofs}, is included in Section \ref{sec:preliminar}.

\section{Crank-Nicolson method for the heat equation}\label{sec:CN}

In this paper we consider the Crank-Nicolson  method, a method of lines approach where second order central finite differences in space are followed by a second order time-stepping method.
Spatial discretization of heat equation  \eqref{heat_eq}  with second order  central finite differences  and mesh width $h=1/(m+1)$, gives the semi-discrete linear differential system 
\begin{equation} \label{EDO_heat}
w'(t)=B_h w(t)\, , \qquad w(0)=w_0\, , \,  t\geq 0\, , 
\end{equation}
where  $B_h=(d/h^2)\, \text{tridiag}(1,-2,1)$ is a matrix of dimension $m$,  $w(t)\approx \left(u(x_i, t)\right)_{i=1}^m$,  $w_0=\left(u_0(x_i)\right)_{i=1}^m$, and $x_i=i h$, 
$i=1, \ldots, m$, are the grid points.

As the diffusion  problem  \eqref{heat_eqEDP}-\eqref{heat_IC}  is positivity preserving \eqref{edp_pos} and monotonically decreasing  \eqref{edp_cont}, in order to obtain numerical approximations with these qualitative properties, problem \eqref {EDO_heat} should also be positivity preserving and contractive in the maximum norm. 

An initial value problem 
\begin{equation}\label{ODE_nl}
w'(t)=f(t, w(t)), \qquad w(t_0)=w_0\, ,  \, t\geq 0\, ,
\end{equation}
is called positivity preserving (positive for short) if $w_0\ge 0$ implies that  $w(t)\ge 0$ for $t\geq 0$, 
where the inequalities should be understood component-wise. Problem \eqref{ODE_nl}   is said to be contractive in the maximum norm if its solution $w(t)$ satisfy
$$\|w(t_2)\|_\infty\leq \|w(t_1)\|_{\infty} \quad \hbox{for} \, \, t_2 \geq t_1\geq 0\, . $$

It is well known that a linear problem, 
\begin{equation}\label{Ode_lin}
w'(t)=A w(t)\, , \qquad  w(t_0)=w_0\, , \, t\geq 0\, , 
\end{equation}
 where $A=(a_{ij})$ is an $m\times m$ matrix, is  positive if and only if $a_{ij}\geq 0$ for all $i\neq j$ \cite[Theorem 7.2]{hundsdorfer2003nst}. Matrix $B_h$ in \eqref{EDO_heat}  satisfies this condition and thus problem \eqref{EDO_heat} is positive. 
Observe that other spatial discretizations do not preserve positivity; indeed, there is 
an order barrier ($q\le 2$) from the requirement of positivity  \cite[p. 119]{hundsdorfer2003nst}.

Contractivity of solutions of the linear problem \eqref{Ode_lin} can be proven by using  the concept of logarithmic norm of matrix $A$. This concept  is an extremely useful tool to analyze the growth of solutions to ordinary differential equations because it  can take negative values. 
The solutions of problem \eqref{Ode_lin}   are of the form $w(t)=e^{At}w(0)$.
If we consider a vector norm and  its subordinate matrix norm, both  denoted   by $\|\cdot\|$,  then 
\begin{equation}
\|w(t)\|=\|e^{t A}w(0)\| \leq \|e^{t A}\| \, \|w(0)\|\, , \label{des_1}
\end{equation}
and contractivity is obtained if and only if  $\|e^{t A}\|\leq 1$.
Given the set
\[
{\cal M}=\left \{\delta \in \mathbb{R} \, | \, \| e^{tA}\|\leq  e^{t \delta} \, , t\geq 0\right\},
\]
it can be proven that $\mu_{\|\cdot\|}[A]=\min ({\cal M})$, where $\mu_{\|\cdot\|}[A]$ stands for the logarithmic norm of matrix $A$ in the norm ${\|\cdot\|}$ \cite[Proposition 2.1]{soderlind2006logarithmic}  (see, e.g., \cite{soderlind2006logarithmic,strom1975logarithmic,desoer1972measure} and the references therein for the definition and properties of logarithmic norms).  

From \eqref{des_1} and the definition of ${\cal M}$, we get the inequalities
\begin{equation*}
\|w(t)\|\leq e^{t\mu_{\|\cdot\|}[A]} \, \|w(0)\|\, , \quad t\geq 0\, , \qquad \quad \| e^{t A}\|\leq  e^{t\, \mu_{\|\cdot\|}[A]} \quad t\geq 0\, .
\end{equation*}
Thus,  if $\mu_{\|\cdot\|}A]\leq 0$, the zero solution is stable and $\| e^{t A}\|\leq  1$ for $t\geq 0$; 
if $\mu_{\|\cdot\|}[A] < 0$, then  the zero solution is exponentially stable and  $\|e^{t A}\|< 1$ for $ t> 0$ \cite[p. 634]{soderlind2006logarithmic}, \cite[p. 2]{kraaijevanger1992maximum}.

For the maximum norm, the logarithmic norm of a matrix $A=(a_{ij})$  is given by
$$\mu_{\infty}[A]= \max_{1\leq i \leq n} \left(a_{ii} + \sum_{j=1}^n |a_{ij}|\right)\, . 
$$
In particular, for matrix $B_h$ in \eqref{EDO_heat}, as
$$ a_{ii} + \sum_{j=1}^n |a_{ij}| = \begin{cases}
-1\, , & i=1, m\, ,\\
0\, , & i=2, \ldots, m-1\, ,
\end{cases}
 $$ 
we get $\mu_\infty[B_h]=0$, and thus
\begin{equation}\label{cont_Ah}
\|e^{t B_h}\|_{\infty}\leq 1, \qquad t\geq 0\, , 
\end{equation}
that ensures that problem  \eqref{EDO_heat} is contractive in the maximum norm, that is,
$$\|w(t)\|_{\infty}\leq  \|w(0)\|_{\infty}\, , \quad t\geq 0\, .$$

The time stepping process in Crank-Nicolson method with constant time step $\tau$, gives the iteration
\begin{equation*}
w_{n}=\phi(\tau B_h)w_{n-1}\, , \quad n\ge1\,,
\end{equation*} 
where 
\begin{equation}\label{sf_trapezoidal}
\phi(z)=\frac{1+\frac{1}{2}z}{1-\frac{1}{2}z}
\end{equation}
 is the stability function of the time integrator. On the following, we denote 
\mbox{$A_m=\phi(\tau B_h)$} 
 to the Crank-Nicolson iteration matrix of dimension $m$, that is,
\begin{equation}\label{aeAt} 
 A_{m}=\left(I_m-\tfrac{\tau}{2} B_h\right)^{^{\!\!\!-1}}\!\!\!\left(I_m+\tfrac{\tau}{2} B_h\right)=
\setlength{\arraycolsep}{2pt}
\begin{pmatrix}
1+s&-\frac{s}{2} &&    \\[1ex]
 -\frac{s}{2}&1+ s&\ddots &\\[1ex]
&\ddots&\ddots &-\frac{s}{2}\\[1ex]
 &&-\frac{s}{2}&1+s
\end{pmatrix}^{^{\!\!\!\!\!\!-1}}\!\!\!
\begin{pmatrix}
1-s&\frac{s}{2} &&    \\[1ex]
 \frac{s}{2}&1- s&\ddots &\\[1ex]
&\ddots&\ddots &\frac{s}{2}\\[1ex]
 &&\frac{s}{2}&1-s
\end{pmatrix}.
\end{equation}%
Observe that the two matrices in \eqref{aeAt}, corresponding to half step with  forward Euler and half step with backward Euler, commute  because of the the linearity of the system \eqref{EDO_heat}. 
Besides, positivity and contractivity can be studied by analyzing these properties for forward and backward Euler separately. 

Although there are no restrictions for positivity and contractivity  with backward Euler applied to system \eqref{EDO_heat}, the   restriction for positivity and contractivity   with forward Euler is  
$s\le 1$\,. This stepsize restriction for positivity is not sharp for problem  \eqref{EDO_heat}; numerical experiments in  \cite[p.126]{hundsdorfer2003nst} show that numerical positivity can be obtained for $s\lesssim 1.17$. 
As it has been pointed out above, a closer look at the iteration matrix $A_m$ in \eqref{thetamethod} 
or \eqref{aeAt}, gives sharper  bounds.

\section{Main results}\label{sec:main}
In this section we show the main results of the paper concerning stepsize restrictions for positivity and contractivity in the maximum norm for the $m$-dimensional system \eqref{EDO_heat}. The proofs require some preliminary material about the  structure of matrix $A_{m}$  and are   given in section \ref{sec_proofs}.

On the following theorems,   the positivity of the matrix  $A_{m}$ means that all the entries of the matrix are non-negative; similarly, the contractivity in the maximum norm of the matrix  $A_{m}$ means $\norm{A_m}_\infty\le 1$.

\begin{theorem} (Positivity of Crank Nicolson method) \label{Th:pos}\mbox{}
\begin{enumerate}
\item For $m\in \mathbb{N}$, the matrix $A_{m}(s)$ in \eqref{aeAt} is positive if and only if 
 \begin{equation} \label{sCFL_pos}
 s\le  s_m^{(p)}:=\frac{1}{\cosh \omega_m^{(p)} -1} \,,
 \end{equation} 
where $\omega_m^{(p)} {\in \big( \log(2+\sqrt{2}),  \log(2+\sqrt{3}) \big]}$ is the unique positive root of equation  
\begin{equation}\label{wroot_pos}
\coth(m\omega)\, {\sinh\omega}={3 \cosh\omega -4}\, . 
\end{equation}
\item The sequence $(s_m^{(p)})$  is   strictly monotonically increasing  with all the terms 
in the narrow interval $ \big[1, 2(2-\sqrt{2})\big).$ As a consequence, Crank Nicolson method preserves positivity when the spatial mesh is refined (keeping $s$ constant).
\end{enumerate}
\end{theorem}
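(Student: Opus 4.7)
The plan is to exploit the commutation of the two factors in \eqref{aeAt} and obtain a closed-form entrywise expression for $A_m(s)$. Both factors are polynomials in the tridiagonal matrix $T := \text{tridiag}(1,-2,1)$, since $\tau B_h/2 = (s/2) T$, so $A_m = (I - (s/2)T)^{-1}(I + (s/2)T)$. The resolvent on the left is the inverse of a symmetric tridiagonal Toeplitz matrix, whose entries admit the classical representation via Chebyshev polynomials of the second kind. Using the parametrization $s = 1/(\cosh\omega - 1)$ (equivalently $\cosh\omega = 1 + 1/s$) together with the identity $U_n(\cosh\omega) = \sinh((n+1)\omega)/\sinh\omega$, the entries of $A_m$ become rational expressions in hyperbolic sines of multiples of $\omega$; this is precisely the setting of the polynomial families $P_n, C_n, U_n$ introduced earlier.

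The next step is to reduce positivity of $A_m(s)$ to a single critical inequality. Once the entries are displayed explicitly, many of them are manifestly non-negative as ratios of positive combinations of $\sinh(k\omega)$; the binding constraint should come from an extremal off-diagonal entry where the forward-Euler contribution $(1-s)$ turns negative and must be compensated by the resolvent part. Setting this critical entry to zero and simplifying via standard hyperbolic addition formulas should yield precisely equation \eqref{wroot_pos}. Two sanity checks confirm this equation is the right one. For $m=1$, $A_1 = (1-s)/(1+s)$ is non-negative iff $s \le 1$, corresponding to $\cosh\omega_1^{(p)} = 2$, i.e.\ $\omega_1^{(p)} = \log(2+\sqrt{3})$, the right endpoint of the stated interval. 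Taking $m\to\infty$ yields $\coth(m\omega)\to 1$, reducing \eqref{wroot_pos} to $\sinh\omega = 3\cosh\omega - 4$; squaring gives $8\cosh^2\omega - 24\cosh\omega + 17 = 0$, hence $\cosh\omega = (6+\sqrt{2})/4$ and $\lim_m s_m^{(p)} = 2(2-\sqrt{2})$, matching the left endpoint.

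For the uniqueness of the positive root and the monotonicity claim, let $g_m(\omega) := \coth(m\omega)\sinh\omega - 3\cosh\omega + 4$. A direct expansion gives $g_m(0^+) = 1/m + 1 > 0$ and $g_m(\omega) \sim -e^\omega$ as $\omega\to\infty$, so at least one positive root exists. For uniqueness I would differentiate and check that $g_m$ is strictly decreasing on the relevant subinterval (at minimum, that every positive root is simple, which combined with the sign change suffices). For the monotonicity of $(s_m^{(p)})$, observe that $\coth(k\omega)$ is strictly decreasing in $k$ for fixed $\omega > 0$, so $g_{m+1}(\omega_m^{(p)}) < g_m(\omega_m^{(p)}) = 0$; together with the local decrease of $g_{m+1}$ at its root, this forces $\omega_{m+1}^{(p)} < \omega_m^{(p)}$, hence $s_{m+1}^{(p)} > s_m^{(p)}$. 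The initial value $s_1^{(p)} = 1$ and the non-attained limit $2(2-\sqrt{2})$ yield the interval $[1, 2(2-\sqrt{2}))$ stated in the theorem.

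The \emph{main obstacle} I anticipate is the first step: writing out the entries of $A_m$ from the Chebyshev-like representation and correctly identifying the one entry whose non-negativity is the binding constraint. The entrywise formulas mix sums and products of $\sinh(k\omega)$, and the algebraic collapse of the binding inequality into the compact identity $\coth(m\omega)\sinh\omega = 3\cosh\omega - 4$ will require repeated use of Chebyshev addition formulas. Presumably Proposition \ref{proposition:p(x)} packages this reduction via a specific factorization of the polynomial $P_n$, turning the sign condition on the critical entry into a root condition that is cleanly parametrized by $\omega$.
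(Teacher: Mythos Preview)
Your plan matches the paper's argument almost exactly: represent $A_m$ through the polynomial families $U_n,P_n,C_n$, reduce entrywise non-negativity to the sign of a single entry, and then invoke Proposition~\ref{proposition:p(x)} to locate the unique root of that entry via the equation $\coth(m\omega)\sinh\omega=3\cosh\omega-4$. Your monotonicity argument (comparing $g_{m+1}$ to $g_m$ via the strict monotonicity of $k\mapsto\coth(k\omega)$) and your two limiting checks ($m=1$ and $m\to\infty$) are likewise the same as what the paper carries out in the proof of Proposition~\ref{proposition:p(x)}.

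There is one concrete slip you should correct before executing the plan. You write that the binding constraint comes from ``an extremal \emph{off-diagonal} entry''. In fact the off-diagonal entries of $A_m$ are all strictly positive for $x>1$: in the representation \eqref{Aimpar}--\eqref{Apar} they are sums of the $C_n(x)$, and $C_n(x)=4(x-1)U_{n-1}(x)>0$ for $x>1$ (Corollary~\ref{Cpositive}). The entry that can become negative is the \emph{corner diagonal} entry $A_m^{(1,1)}=P_m(x)/U_m(x)$; the remaining diagonal entries are $P_m$ plus additional positive $C_n$ terms and hence dominate it. This is exactly the content of Lemma~\ref{lemma_Pm}. Your own $m=1$ sanity check already points to this: $A_1=(1-s)/(1+s)$ is the $(1,1)$ entry, not an off-diagonal one. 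Once you redirect attention to $P_m$, the rest of your outline goes through verbatim and coincides with the paper's proof.
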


\begin{remark}\mbox{}
\begin{enumerate}
\item The sequence $(s_m^{(p)})$  increasingly converges to the limit value $s_\infty^{(p)}:=2(2-\sqrt{2})$ (see Table \ref{Tab:pos}  and Figure \ref{Fig:sequences}).  This  value was also obtained in 
\cite{farago2010non} with other techniques. 
Consequently, if  
\begin{equation}\label{cota_positividad}
s< s_\infty^{(p)}=2(2-\sqrt{2})\approx 1.17\,,
\end{equation}
then there exists a natural number $m_0$ such that the matrix $A_{m}$   is positive for any value of $m\ge m_0$.
\item As $\omega_m^{(p)} {\in \big( \log(2+\sqrt{2}),  \log(2+\sqrt{3}) \big]}\approx \left(\left.1.22795, 1.31696\right.\right]$, an approximated  value can be easily computed by the bisection method. 
\end{enumerate}
In Table \ref{Tab:pos} below we show the roots  
$\omega_m^{(p)}$ of equation \eqref{wroot_pos} and the CFL restrictions $s_m^{(p)}$ for positivity  in  \eqref{sCFL_pos}  for different values of $m$.
\end{remark}   
\begin{table}[ht]  
\centering
\begin{tabular}{|c|c|c|c|} 
      \hline
      $m$ &\phantom{\rule[0cm]{0cm}{0.5cm}} 
      $\omega_m^{(p)}$ & $x_m^{(p)}=\cosh \omega_m^{(p)}$ & $s_m^{(p)}=1/(x_m^{(p)}-1)$ \\
      \cline{1-4}
      1 &\phantom{\rule[-0.1cm]{0.0cm}{0.53cm}}%
       $\log(2 + \sqrt{3})$ & $2$& 1\\
      \cline{1-4}
      2 &\phantom{\rule[-0.1cm]{0.0cm}{0.53cm}}%
      1.23590& $1+\sqrt{3}/2\approx 1.86603$&
      $2/\sqrt{3}\approx 1.15470$  \\
      \cline{1-4}
     3 & 1.22864&1.85464&1.17009\\      
      \cline{1-4}
     4 & 1.22801&1.85365&1.17144\\      
      \cline{1-4}      
      \vdots &  \vdots&\vdots  &\vdots \\
      \cline{1-4}
      7 & 1.22795& 1.85355& 1.17157\\
      \cline{1-4}
      \vdots &  \vdots&\vdots  &\vdots \\
      \cline{1-4}
      $\infty$ &\phantom{\rule[-0.1cm]{0.0cm}{0.53cm}}%
       $\log(2 + \sqrt{2})$ & 
      $(6+\sqrt{2})/4$  &
      $2(2-\sqrt{2})\approx 1.171572875$ \\ 
      \cline{1-4}
    \end{tabular}
          \caption{Roots $\omega_m^{(p)}$ of \eqref{wroot_pos} and CFL restrictions $s_m^{(p)}$ in \eqref{sCFL_pos} for positivity.}\label{Tab:pos}
  \end{table}

Next, we give the results for contractivity  in the infinite norm. Observe that the symmetry of  matrix ${A_m}$ makes 
$\norm{A_m}_\infty=\norm{A_m}_1$, and the result is also valid for the 1-norm.

\begin{theorem}(Contractivity of Crank Nicolson method)   \label{Th:con} \mbox{}
\begin{enumerate}
\item For $m\in\{1,2,3\}$ the matrix $A_{m}(s)$ in \eqref{aeAt} is contractive in the maximum norm  for any value of $s>0$.
\item For $m\in \mathbb{N}$, $m\ge 4$\,, the matrix $A_{m}(s)$ in \eqref{aeAt} is contractive in the maximum norm if and only if 
 \begin{equation}\label{iff4}
 s\le  s_m^{(c)}:=\frac{1}{\cosh \omega_m^{(c)} -1} \,,
 \end{equation} 
where $\omega_m^{(c)}\in \big[\log \big((3+\sqrt{5}+\sqrt{\smash[b]{-2+6 \sqrt{5}}})/4\big), \log 3\big)%
\approx[0.767197,1.09861)$ is the uni\-que positive root of equation  
\begin{equation}\label{ineqodd_0}
2\sinh\frac{(m-1)\omega}{4}\, \sinh\frac{(m+1)\omega}{4}= \sinh\frac{\omega}{2}\, \sinh\frac{(m+1)\omega}{2}\,,
\end{equation}
 if $m$ is odd, or equation
{\footnotesize \begin{equation}\label{ineqeven_0}
\sinh^2\left(\frac{\omega}{2}\right)   \sinh\frac{m\omega}{2}\left(\sinh\frac{(m+2)\omega}{2}-\sinh\frac{m\omega}{2}\right)
= \sinh \omega \, \sinh\frac{(m+1)\omega}{2}\sinh\frac{m\omega}{4}\sinh\frac{(m-2)\omega}{4} \, , 
\end{equation}}
if $m$ is even. 
\item The sequence $(s_m^{(c)})$  is  strictly monotonically decreasing    with all the terms 
in the   interval $\big(3/2, 1+\sqrt{5}\,\big]$.
As a consequence,   Crank Nicolson method   does not 
preserve contractivity when the  spatial mesh is refined (keeping $s$ constant). 
\end{enumerate}
\end{theorem}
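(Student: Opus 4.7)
My plan starts from the algebraic identity $A_m = 2M^{-1} - I$, with $M = I - \tfrac{\tau}{2} B_h = \operatorname{tridiag}(-s/2,\,1+s,\,-s/2)$, which is immediate from $A_m = M^{-1}(2I - M)$. Since $M$ is a symmetric, strictly diagonally dominant M-matrix, $M^{-1}$ has strictly positive entries, so all off-diagonal entries of $A_m$ are non-negative while only the diagonal entries may change sign with $s$. Writing $\sigma_i(s) = \sum_j (M^{-1})_{ij}$, the $i$-th $\ell^\infty$-row sum of $A_m$ therefore reduces to
\[
R_i(s) = \bigl|\,2(M^{-1})_{ii} - 1\,\bigr| + 2\bigl(\sigma_i - (M^{-1})_{ii}\bigr),
\]
which equals $2\sigma_i - 1$ when $(M^{-1})_{ii} \ge 1/2$ and $1 + 2\sigma_i - 4(M^{-1})_{ii}$ otherwise. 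Contractivity $\|A_m\|_\infty \le 1$ is thereby recast as a pointwise comparison involving the explicit quantities $\sigma_i$ and $(M^{-1})_{ii}$.

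The next step is to substitute the classical closed form $(M^{-1})_{ij} = (2/s)\,U_{i-1}(x)\,U_{m-j}(x)/U_m(x)$ for $i\le j$, with $x = (1+s)/s$, and then set $x = \cosh\omega$, so that $s = 1/(\cosh\omega - 1)$ and $U_n(\cosh\omega) = \sinh((n+1)\omega)/\sinh\omega$. Every entry of $A_m$, every $\sigma_i$, and in particular every $R_i$, becomes an explicit ratio of hyperbolic sines; this is precisely the hyperbolic incarnation of the polynomials $U_n$, $P_n$, $C_n$ introduced in Section~\ref{sec:preliminar}. Exploiting the centrosymmetry $R_i = R_{m+1-i}$ and showing that $R_i$ is unimodal in $i$ with maximum at the centre of the matrix, I would then identify $\|A_m\|_\infty$ with the value of $R$ on the middle row — single when $m$ is odd, a pair of equal values when $m$ is even — which is the structural source of the split between \eqref{ineqodd_0} and \eqref{ineqeven_0}.

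For the small cases $m\in\{1,2,3\}$ a direct computation shows $R_{\text{mid}}(s) < 1$ for every $s > 0$, yielding unconditional contractivity. For $m\ge 4$ I would set $R_{\text{mid}}(s) = 1$, clear $\sinh$-denominators and apply repeated product-to-sum identities until the equation collapses precisely to \eqref{ineqodd_0} or \eqref{ineqeven_0}. Uniqueness and localisation of the root $\omega_m^{(c)}$ then follow from monotonicity-in-$\omega$ of each side together with sign evaluations at the two endpoints of the claimed interval; since $s = 1/(\cosh\omega - 1)$ is a strictly decreasing bijection of $\omega > 0$ onto $(0,\infty)$, this immediately yields $s_m^{(c)} \in (3/2,\,1+\sqrt{5}\,]$. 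For item (3), I would prove monotonicity of $(\omega_m^{(c)})$ by evaluating the defining function at $\omega = \omega_{m-1}^{(c)}$ and reading off a strict sign, and obtain the limit from \eqref{ineqodd_0} via the asymptotics $\sinh(k\omega) \sim \tfrac12 e^{k\omega}$, which reduce the defining equation to $e^\omega = 3$; hence $\omega_\infty^{(c)} = \log 3$, $\cosh\omega_\infty^{(c)} = 5/3$, and $s_\infty^{(c)} = 3/2$.

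The main obstacle I anticipate is the algebraic reduction in the third step: turning the opaque hyperbolic-sine expression for $R_{\text{mid}}(s) = 1$ into the compact identities \eqref{ineqodd_0} and \eqref{ineqeven_0}. The even case looks especially delicate, since the extra right-hand-side factor $\sinh\tfrac{m\omega}{4}\sinh\tfrac{(m-2)\omega}{4}$ in \eqref{ineqeven_0} suggests that the contribution of the two central rows must be combined carefully, with close attention to the sign of $2(M^{-1})_{ii} - 1$ on each.
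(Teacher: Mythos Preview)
Your plan is sound and follows essentially the same route as the paper. The identity $A_m=2M^{-1}-I$ is a clean repackaging of what the paper obtains via the polynomials $P_n,C_n,U_n$ (note $C_n(x)=4(x-1)U_{n-1}(x)$ encodes exactly your off-diagonal entries $2(M^{-1})_{ij}$, and $P_m=C_m-U_m$ your diagonal correction), and your substitution $x=\cosh\omega$ is identical to theirs; the paper likewise proves that the maximal row sum is attained at the central row(s) (their Lemma~\ref{proposition_norm_inf}, via the discrete convexity $2C_n\le C_{n-1}+C_{n+1}$, which is what you will need for unimodality of $R_i$), then splits on the sign of the central diagonal entry and reduces $R_{\mathrm{mid}}=1$ to \eqref{ineqodd_0}/\eqref{ineqeven_0} through exactly the hyperbolic manipulations you anticipate.
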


\begin{remark} 
The sequence $(s_m^{(c)})$  decreasingly converges to the limit value $s_\infty^{(c)}:=3/2$ (see   Figures  \ref{Fig:sequences} and \ref{Fig:normaAm}). Consequently, the matrix $A_m(s)$ is contractive for all $m$ if and only if 
\mbox{$s\in (0,3/2]$}. 
For infinite matrices the bound $s_\infty^{(c)}:=3/2$ has been obtained with different techniques in \cite{farago2002sharpening,kraaijevanger1992maximum}. 
\end{remark}

In Tables  \ref{Tab:Con_odd} and \ref{Tab:Con_even} we give the CFL restrictions $s_m^{(c)}$ in \eqref{iff4} for contractivity in the infinite norm. The  values in Table \ref{Tab:Con_odd} (odd case) and  Table \ref{Tab:Con_even} (even case)  have been obtained from equations \eqref{ineqodd_0} and \eqref{ineqeven_0}, respectively. Observe that both, {the roots $\omega_m^{(c)}$ of equation \eqref{ineqodd_0} (odd case) and the roots $\omega_m^{(c)}$ of equation \eqref{ineqeven_0} (even case), increasingly converge to the limit value
$\log 3$.  Consequently these roots are in the narrow 
interval~${\big[\log \big((3+\sqrt{5}+\sqrt{\smash[b]{-2+6 \sqrt{5}}})/4\big), \log 3\big)\approx [0.767197,1.09861)}$
and can be easily obtained with bisection method. 
The numeric values shown in tables \ref{Tab:Con_odd} and \ref{Tab:Con_even} were obtained after 10 iterations with bisection method.
\begin{figure}[ht]
\begin{picture}(0,170)(0,0)%
\put(100,0){\includegraphics[scale=0.34]{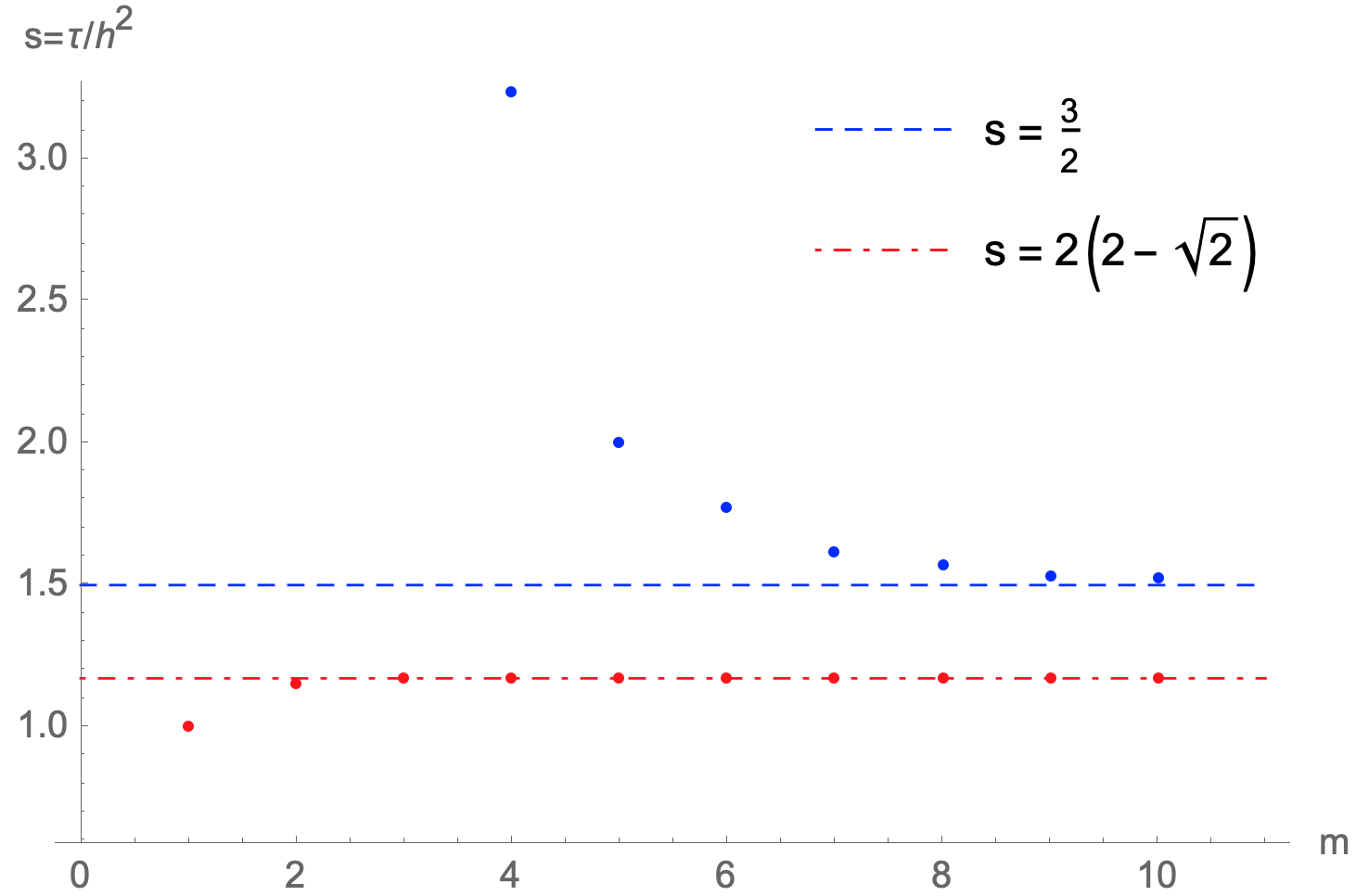}}
\put(200,100){\textcolor{blue}{$s_m^{(c)}$}}
\put(145,20){\textcolor{red}{$s_m^{(p)}$}}
\end{picture}
\caption{Sequences 
$s_m^{(p)}$  and $s_m^{(c)}$ 
with the restrictions over CFL coefficient $s$   for positivity and contractivity.}\label{Fig:sequences}
\end{figure}

\begin{corollary}
Consider the numerical integration of the $m$-dimensional problem \eqref{EDO_heat} with the Crank-Nicolson method. Then
if the  method is positive for a given CFL coefficient, then it is also contractive in the infinity  and 1 norms. 
\end{corollary}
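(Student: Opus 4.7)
The plan is to observe that this corollary is really just a comparison between the two sharp bounds furnished by Theorems \ref{Th:pos} and \ref{Th:con}, and therefore requires essentially no new work. I will handle the symmetry issue for the $1$-norm first, then split on the size of $m$.

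First, since the iteration matrix $A_m$ in \eqref{aeAt} is symmetric (both factors are symmetric, tridiagonal with constant off-diagonals, and commute), we have $\|A_m\|_1 = \|A_m\|_\infty$ for every $s$, so it suffices to prove contractivity in the infinity norm.

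Next, let $s>0$ be a CFL coefficient for which the Crank--Nicolson method is positive on the $m$-dimensional problem \eqref{EDO_heat}. By Theorem \ref{Th:pos}, item 1, we have $s \le s_m^{(p)}$, and item 2 of the same theorem gives
\[
s_m^{(p)} \;<\; 2(2-\sqrt{2}) \;\approx\; 1.17\,.
\]
For $m\in\{1,2,3\}$, Theorem \ref{Th:con}, item 1, states that $A_m(s)$ is contractive in the maximum norm for every $s>0$, and in particular for our $s$, so there is nothing more to prove in that range.

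For $m\ge 4$, Theorem \ref{Th:con}, item 3, guarantees $s_m^{(c)} > 3/2$. Hence the key numerical comparison is
\[
s \;\le\; s_m^{(p)} \;<\; 2(2-\sqrt{2}) \;<\; \frac{3}{2} \;<\; s_m^{(c)}\,,
\]
where the middle inequality amounts to the elementary fact $4-2\sqrt{2} < 3/2$, i.e.\ $5 < 4\sqrt{2}$, i.e.\ $25 < 32$. Applying Theorem \ref{Th:con}, item 2, we conclude that $A_m(s)$ is contractive in the maximum norm, and therefore also in the $1$-norm by the symmetry remark above.

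The only step that could be called an ``obstacle'' is this elementary numerical inequality between the two limiting constants $2(2-\sqrt{2})$ and $3/2$; everything else is bookkeeping of the intervals already computed in the main theorems. Since the two admissibility intervals for positivity and contractivity turn out to be disjoint (with the positivity interval strictly to the left), the implication positivity $\Rightarrow$ contractivity is immediate and no further spectral or polynomial analysis is needed.
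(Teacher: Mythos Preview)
Your proof is correct and follows the same approach as the paper, which simply states that the corollary ``is straightforward from Theorems \ref{Th:pos} and \ref{Th:con}''; you have merely spelled out the case split and the elementary inequality $2(2-\sqrt{2})<3/2$ that makes the implication immediate. One small wording issue: at the end you say the two ``admissibility intervals'' are disjoint, but the contractivity interval $(0,s_m^{(c)}]$ actually contains the positivity interval $(0,s_m^{(p)}]$; what is disjoint is the pair of ranges $[1,2(2-\sqrt{2}))$ and $(3/2,1+\sqrt{5}\,]$ in which the thresholds $s_m^{(p)}$ and $s_m^{(c)}$ respectively lie.
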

\begin{proof}
It is straightforward from Theorems \ref{Th:pos} and \ref{Th:con}. 
\end{proof}
The converse to the previous Corollary is not true as we can see in the following trivial example, where contractivity is preserved while positivity is violated.

\begin{example}
Consider the diffusion equation in \eqref{heat_eq} with initial function
\[
u(x,0)=\left\{\begin{array}{ll}
0 & \text { for } 0<x<\frac{7}{8}\,, \\ 
1 & \text { for } \frac{7}{8} \leq x<1\,,
\end{array}\right.
\]
giving discontinuities at $x=7/8$ and $x=1$ for t=0. 
From second order central differences with $h=1/8$ we get approximations $\omega(t)=(\omega^1(t),
\ldots,\omega^7(t))\approx (u(x_1,t),\ldots,u(x_7,t))$. 
Application with $\tau=0.025$ of one Crank-Nicolson step,
$w_{1}=A_7\, w_{0}$, 
 gives  the vector $w_{1}\approx \omega(\tau)$
\begin{equation}\label{NOpositivity}
w_{1}=(0.0013,0.0041,0.0120,0.0356,
              0.1019,0.2961,-0.1397)\,,
\end{equation} 
where $A_7$ is the Crank-Nicolson iteration matrix in 
\eqref{aeAt} for the case $m=7$ and $w_{0}$ is the initial profile $w_{0}=(0,0,0,0,0,0,1)$. Observe that 
$\norm{w_{1}}_{\infty}=0.2961<\norm{w_{0}}_{\infty}=1$.

In this example the CFL coefficient 
$s=\tau/h^2=1.6$ is greater than the 
positivity bound $s_7^{(p)}=1.17157$ (see Table \ref{Tab:pos}), but it is lower 
than the contrativity  one \mbox{$s_7^{(c)}=1.61803$}~(see Table \ref{Tab:Con_odd}). Consequently, contractivity is preserved while we cannot ensure positivity. Actually, as we can see in vector 
$w_{1}$ in \eqref{NOpositivity}, negativity is not preserved.
\end{example}
\begin{table}[ht]
\centering 
\begin{tabular}{|c|c|c|c|} 
      \hline
      $m$ &\phantom{\rule[0cm]{0cm}{0.5cm}}
      $\omega_m^{(c)}$ & $x_m^{(c)}=
      \cosh \omega_m^{(c)}$
      & $s_m^{(c)}=1/(x_m^{(c)}-1)$ \\  \hline
      3 &   &  &  $\infty$ \\       \hline
      5 & $2\operatorname{arccsch}2\approx 0.962424$
      & 3/2 & 2\\     \hline
      7 &\phantom{\rule[-0.25cm]{0cm}{0.9cm} }$
      \log \frac{\bigl(1+\sqrt{5}+\sqrt{2(1+\sqrt{5})}\,\bigr)}{2}
      \approx 1.06131$& $(1+\sqrt{5})/2$& 
      $(1+\sqrt{5})/2\approx 1.61803$\\  \hline
       9 & $1.08707$& $1.65139$& $1.53518$\\ \hline
      \vdots &  \vdots&\vdots  &\vdots \\   \hline
      $\infty$ & $\log 3\approx 1.09861$ & $5/3$ &
      $3/2$  \\     \hline
    \end{tabular}
    \caption{Positive root of \eqref{ineqodd_0} and 
    bounds for contractivity  (odd case).}
    \label{Tab:Con_odd}
  \end{table}
 \begin{table}[h!]
\centering
\begin{tabular}{|c|c|c|c|} 
      \hline
      $m$ &\phantom{\rule[0cm]{0cm}{0.5cm}}
       $\omega_m^{(c)}$ & $x_m^{(c)}=
       \cosh \omega_m^{(c)}$ 
       & $s_m^{(c)}=1/(x_m^{(c)}-1)$ \\  \hline
      4 &\phantom{\rule[-0.25cm]{0cm}{0.8cm} } 
      $ \log \bigl(\frac{1}{4}(3+\sqrt{5}+\sqrt{\smash[b]{-2+6 \sqrt{5}}})\bigr)$ & $\frac{1}{4}(3+\sqrt{5})$ &  $1 + \sqrt{5}$ \\
        & $\approx 0.767197$ & $\approx 1.30902$ &  $\approx 3.23607$ \\
        \hline
            \vdots &  \vdots&\vdots  &\vdots \\
            \hline
      10 & $1.09110$& $1.65669$& $1.52278$\\      
      \hline  
            \vdots &  \vdots&\vdots  &\vdots \\
            \hline
      20 & $1.09855$& $1.66658$& $1.5002$\\
      \hline
            \vdots &  \vdots&\vdots  &\vdots \\
            \hline
      $\infty$ & $\log 3\approx 1.09861$ & $5/3$  &
      $3/2$  \\
      \hline
    \end{tabular}
     \caption{Positive root of \eqref{ineqeven_0} and bounds for contractivity  (even case).}\label{Tab:Con_even}
  \end{table}

\section{Conclusions and future work}\label{sec:conclusions}
In this paper we have studied CFL restrictions when the Crank-Nicolson method is used to solve the heat equation \eqref{heat_eq} with Dirichlet boundary conditions.
We have obtained bounds $s_m^{(p)}$ for positivity and 
bounds $s_m^{(c)}$ for contractivity for any value of the spatial discretization parameter $m$. To get these bounds we have represented the Crank-Nicolson iteration matrix $A_m$ in terms of some Chebyshev-like polynomials 
(\ref{chebyshev22},\ref{chebyshev22G},\ref{chebyshev22R}). We have obtained  bounds   
for the $\theta$-method \eqref{thetamethod} for the particular case $\theta=1/2$, but similar bounds can be obtained for other values of the parameter following the same ideas.

We have seen that the positivity of matrix   $A_m$ is determined by the largest root of polynomial $P_m(x)$, and we have provided a narrow interval where this root can be found. Similarly, we have considered these polynomials to analyze the contractivity and we have provided a narrow interval to get the corresponding bounds, both in the odd and even case. 

As far as we know, polynomials $P_n(x)$ and $C_n(x)$ have not been used previously in the literature. The strength of this idea can be used to prove qualitative properties for other problems.  
       Furthermore,  this approach can also be used for other discretizations of the heat equation \eqref{heat_eqEDP} \cite{HiRoPeriodicBC}.

\section{Preliminary material for the proofs of the main results}
\label{sec:preliminar} 
In this section we introduce the notation, definitions and some results needed to prove the main results of the paper. In subsection \ref{subsect:Am} we express the  
Crank-Nicolson iteration matrix  \eqref{aeAt} in terms of   rational functions. These 
functions can be  written easily with the help of some 
 Chebyshev-like polynomials $U_m$, $P_m$ and $C_m$. In  subsection \ref{sec:polynomials} we give the  definition    of these polynomials and we also add some results   that  will be used in the proofs of Section~\ref{sec_proofs}.

\subsection{The Crank-Nicolson matrix $A_m$ in terms of rational functions} \label{subsect:Am}
A direct computation of the product 
$(I_m-\frac{\tau}{2}B_h)^{-1}(I_m+\frac{\tau}{2} B_h)$ 
in \eqref{aeAt} gives us the entries of matrix $A_m$ 
expressed as rational functions, where the polynomials involved can be obtained recursively. These   simplified closed  expressions  will make it easier to get bounds for positivity and contractivity. 
\begin{example} \label{Ex:m3}
For $m=3$, a direct computation of the symmetric matrix $A_{3}$ in \eqref{aeAt} gives
\begin{align}
A_{3}(s)&=\left(
\begin{matrix}
\frac{2+2 s-2 s^{2}-s^{3}}{2+6 s+5 s^{2}+s^{3}} & \frac{2 s}{2+4 s+s^{2}} & \frac{s^{2}}{2+6 s+5 s^{2}+s^{3}} \\[1ex]
\frac{2 s}{2+4 s+s^{2}} & \frac{2-s^{2}}{2+4 s+s^{2}} & \frac{2 s}{2+4 s+s^{2}}\nonumber \\[1ex]
\frac{s^{2}}{2+6 s+5 s^{2}+s^{3}} & \frac{2 s}{2+4 s+s^{2}} & \frac{2+2 s-2 s^{2}-s^{3}}{2+6 s+5 s^{2}+s^{3}}
\end{matrix}\right)\,. 
\end{align}
Remember $s={\tau}/{h^2}$ denotes CFL coefficient.
This matrix can  be written even simpler if we consider the new variable $x=1+1/s$. Observe that $x>1$ when $s>0$. With the help of a new kind of polynomials   $U_n(x)$, $P_n(x)$ and $C_n(x)$, we can write $A_3(x)$ as
\[
 A_{3}(x)=\left(\begin{matrix}
\frac{2 x^{3}-4 x^{2}+1}{2 x^{3}-x} & \frac{2(x-1)}{2 x^{2}-1} & \frac{x-1}{2 x^{3}-x} \\[1ex]
\frac{2(x-1)}{2 x^{2}-1} & \frac{2 x^{2}-4 x+1}{2 x^{2}-1} & \frac{2(x-1)}{2 x^{2}-1} \\[1ex]
\frac{x-1}{2 x^{3}-x} & \frac{2(x-1)}{2 x^{2}-1} & 
\frac{2 x^{3}-4 x^{2}+1}{2 x^{3}-x}
\end{matrix}\right)=
 \frac{1}{U_3(x)}\left(\begin{matrix}
{P_3}(x) & {C_2(x)} & {C_1(x)} \\[1ex]
{C_2(x)} & {C_1(x)+P_3(x)} & {C_2(x)} \\[1ex]
{C_1(x)} & {C_2(x)} & {P_3(x)}
\end{matrix}\right). 
\]
Observe that $A_3(x)$ has been written just in terms of 
$U_3(x)$, $P_3(x)$,  $C_1(x)$ and $C_2(x)$. 
We give the definition and all the details about  these  polynomials $U_n(x)$, $P_n(x)$ and $C_n(x)$ in the next subsection.  Before, we extend the ideas in this simple example to the more general case of the matrix $A_m(x)$ for any value of $m$, although we have to distinguish between the odd case and the even case.
\end{example} 
\begin{proposition}
Matrix $A_m(x)$ can be written in terms of polynomials 
$U_m(x)$,  $P_m(x)$ and $C_n(x)$,  $n=1, \ldots, m-1$. 
If $m$ is an odd number, Crank Nicolson matrix 
can be reduced to
\begin{equation}\label{Aimpar}
A_{m}(x)=\frac{1}{U_m}\left(\begin{smallmatrix}
{P_m} & {C_{m-1}}& 
\ldots & {C_{\frac{m+1}{2}}} & \ldots &{C_2} & {C_1} \\[1ex]
{C_{m-1}} & {P_m+C_{m-2}} & 
\ldots & {C_{\frac{m-1}{2}}+C_{\frac{m+3}{2}}}&\ldots&{{C_1+C_3}}& {C_2} \\ 
\vdots & \vdots & \ddots&    \vdots  
   & \reflectbox{$\ddots$}& \vdots& \vdots\\[1ex]
{C_{\frac{m+1}{2}}} & {C_{\frac{m-1}{2}}+C_{\frac{m+3}{2}}} 
& \ldots & P_m+\sum\limits_{n=1}^{\frac{m-1}{2}}C_{2n-1}
 & \ldots &   {C_{\frac{m-1}{2}}+C_{\frac{m+3}{2}}} &{C_{\frac{m+1}{2}}} \\[1ex]
\vdots & \vdots & \reflectbox{$\ddots$}& \vdots      
& \ddots & \vdots& \vdots \\[1ex]
{C_{2}} & {C_1+C_3} 
& \ldots & {C_{\frac{m-1}{2}}+C_{\frac{m+3}{2}}} & \ldots & {P_m+C_{m-2}}& C_{m-1}\\[1ex]
{C_{1}} & {C_{2}} & 
\ldots & {C_{\frac{m+1}{2}}}  &\ldots &{C_{m-1}} & {P_m}
\end{smallmatrix}\right)\,,
\end{equation}
where all the polynomials are evaluated at $x=1+1/s$. 
If $m$ is an even number, we write
\begin{equation}\label{Apar}
A_{m}(x)=\frac{1}{U_m}\left(\begin{smallmatrix}
{P_m} & {C_{m-1}}& 
\ldots &{C_{\frac{m+2}{2}}}& {C_{\frac{m}{2}}} &  \ldots&{C_2} & {C_1} \\[1ex]
{C_{m-1}} & {P_m+C_{m-2}} &
\ldots & {C_{\frac{m}{2}}+C_{\frac{m+4}{2}}}&{C_{\frac{m-2}{2}}+C_{\frac{m+2}{2}}}&\ldots&{{C_1+C_3}}& {C_2} \\ 
\vdots & \vdots & \ddots
& \vdots&\vdots
   & \reflectbox{$\ddots$}& \vdots& \vdots\\[1ex]
{C_{\frac{m+2}{2}}} & {C_{\frac{m}{2}}+C_{\frac{m+4}{2}}} & 
%
 \ldots & P_m+\sum\limits_{n=1}^{\frac{m-2}{2}}C_{2n} & 
\sum\limits_{n=1}^{\frac{m}{2}}C_{2n-1} &\ldots &   {C_{\frac{m-2}{2}}+C_{\frac{m+2}{2}}} &{C_{\frac{m}{2}}} \\[1ex]
{C_{\frac{m}{2}}} & {C_{\frac{m-2}{2}}+C_{\frac{m+2}{2}}} & 
 \ldots &\sum\limits_{n=1}^{\frac{m}{2}}C_{2n-1}     & 
P_m+\sum\limits_{n=1}^{\frac{m-2}{2}}C_{2n}
& \ldots &  {C_{\frac{m}{2}}+C_{\frac{m+4}{2}}} &{C_{\frac{m+2}{2}}} \\[1ex]
\vdots & \vdots &
    & \vdots &  
& \ddots & \vdots& \vdots \\[1ex]
{C_{2}} & {C_1+C_3} & 
\ldots & {C_{\frac{m-2}{2}}+C_{\frac{m+2}{2}}} &{C_{\frac{m}{2}}+C_{\frac{m+4}{2}}} & \ldots &  {P_m+C_{m-2}}& C_{m-1}\\[1ex]
{C_{1}} & {C_{2}} & 
\ldots & {C_{\frac{m}{2}}}& {C_{\frac{m+2}{2}}} & \ldots &{C_{m-1}} & {P_m}
\end{smallmatrix}\right)\,.
\end{equation}
\end{proposition}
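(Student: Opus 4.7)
The plan is to reduce the computation to the inversion of a single tridiagonal Toeplitz matrix, and then apply product-to-sum identities for the underlying Chebyshev-like polynomials.

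First I would exploit the algebraic identity $I + \tfrac{\tau}{2}B_h = 2I - (I - \tfrac{\tau}{2}B_h)$ to rewrite
\begin{equation*}
A_m = \left(I_m-\tfrac{\tau}{2}B_h\right)^{-1}\left(I_m+\tfrac{\tau}{2}B_h\right) = 2\left(I_m-\tfrac{\tau}{2}B_h\right)^{-1} - I_m\,.
\end{equation*}
With the change of variable $x = 1 + 1/s$, the matrix $I_m - \tfrac{\tau}{2}B_h$ equals $\tfrac{s}{2}L_m$, where $L_m$ is the symmetric tridiagonal Toeplitz matrix of size $m$ with $2x$ on the main diagonal and $-1$ on the sub- and super-diagonals, so
\begin{equation*}
A_m = 4(x-1)\, L_m^{-1} - I_m\,.
\end{equation*}
This way the whole problem is reduced to a single, classical object.

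Second, I would invoke the R\'ozsa-type closed-form formula for $L_m^{-1}$: its $(i,j)$ entry with $i\le j$ equals $U_{i-1}(x)\,U_{m-j}(x)/U_m(x)$, where the $U_n$ are the Chebyshev-like polynomials introduced by the recurrence \eqref{chebyshev22R}. Substituting, the off-diagonal entries of $A_m$ become $4(x-1)U_{i-1}(x)U_{m-j}(x)/U_m(x)$ and the diagonal entries become $[4(x-1)U_{i-1}(x)U_{m-i}(x)-U_m(x)]/U_m(x)$.

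Third, the remaining task is to rewrite these numerators in the form displayed in \eqref{Aimpar}--\eqref{Apar}. Since $P_n$ and $C_n$ satisfy the same three-term recurrence as $U_n$ but with different initial data, the trigonometric substitution $x=\cos\theta$ (or $x=\cosh\omega$) provides closed forms for all three families and makes products reducible via $\sin\alpha\sin\beta = \tfrac12\bigl(\cos(\alpha-\beta)-\cos(\alpha+\beta)\bigr)$. The key identity to establish is a product-to-sum formula of the shape
\begin{equation*}
4(x-1)\,U_{i-1}(x)\,U_{m-j}(x) = \sum_{k\in\mathcal{I}_{ij}} C_k(x)\,,
\end{equation*}
with $\mathcal{I}_{ij}$ depending on the position of $(i,j)$ in the matrix; on the diagonal, the extra $-U_m$ term is absorbed into $P_m$ precisely by virtue of the initial values chosen for the latter polynomial in \eqref{chebyshev22}.

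The main obstacle is the careful bookkeeping of the index set $\mathcal{I}_{ij}$: near the boundary (small $i$ or large $j$) the sum contains a single term; in the interior exactly two terms; and along the anti-diagonal, which behaves differently for odd and even $m$, a longer arithmetic-progression-type sum arises from the collision of the ``direct'' and ``reflected'' contributions produced by the product-to-sum formula. It is exactly this case analysis that gives rise to the two different-looking expressions \eqref{Aimpar} (odd $m$) and \eqref{Apar} (even $m$); once the index combinatorics is settled, verifying that each entry matches the stated expression is a routine computation using the three-term recurrence.
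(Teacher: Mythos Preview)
Your approach is correct and considerably more structured than the paper's own proof, which simply asserts that the result ``is straightforward from the computation of the product $(I_m-\tfrac{\tau}{2}B_h)^{-1}(I_m+\tfrac{\tau}{2}B_h)$'' and gives no further detail. Your rewrite $A_m = 4(x-1)L_m^{-1}-I_m$, combined with the classical formula $(L_m^{-1})_{ij}=U_{i-1}U_{m-j}/U_m$ for $i\le j$, reduces everything to the Chebyshev product-to-sum identity $U_aU_b=\sum_{k=0}^{\min(a,b)}U_{|a-b|+2k}$; multiplying by $4(x-1)$ converts each $U_{n-1}$ into $C_n$ via \eqref{CtoU}, and on the diagonal the top term $C_m$ merges with the subtracted $U_m$ to give $P_m$ via \eqref{eq:polCm}. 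This yields every entry uniformly, whereas the paper effectively leaves the verification to the reader.

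Two small corrections are in order. First, your cross-references are swapped: the $U_n$-recurrence is \eqref{chebyshev22}, not \eqref{chebyshev22R}, and the initial data for $P_m$ are in \eqref{chebyshev22G}, not \eqref{chebyshev22}. Second, your description of $\mathcal{I}_{ij}$ is inaccurate: the number of summands at position $(i,j)$ is exactly $\min\{i,j,m-i+1,m-j+1\}$ (the paper records this just after the proposition), so the long sums occur wherever this minimum is large---in particular along the main diagonal near the centre---and not specifically along the anti-diagonal. There is no separate ``collision'' phenomenon; the odd/even split in \eqref{Aimpar}--\eqref{Apar} arises simply because the parity of $m$ determines whether the indices $|a-b|+2k+1$ appearing in the central entries are odd or even.
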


\begin{proof}
It is straightforward from the   computation of the product 
$(I_m-\frac{\tau}{2}B_h)^{-1}(I_m+\frac{\tau}{2} B_h)$ 
in \eqref{aeAt} and the use of polynomials $U_n(x)$,  $P_n(x)$ and $C_n(x)$,  $n=1, \ldots, m-1$, defined in the next subsection.
\end{proof}
Observe that   $A_{m}(x)$ is  bisymmetric, that is, it is symmetric on both diagonals. This implies that  $A_{m}(x)$ is also centrosymmetric. Then the entries  $a_{ij}$ satisfy
$a_{ij}= a_{n-i+1,n-j+1}$\,, for $1\leq i,j \leq n$. 
Consequently, if $m$ is odd, the number of different entries in matrix  $A_m$ is $1+3+5+\cdots+m=(m+1)^2/4$, and, if $m$ is even, this number is $2+4+6+\cdots+m=(m/2+1)m/2$. For example, the number of different elements in matrix $A_3$  in Example \ref{Ex:m3} is 4, while this number is 6 for matrix $A_4$ in Example \ref{Ex:m4} below.

Observe also that the numerator of each entry 
$a_{ij}$ in   matrix $A_{m}(x)$ is a sum of some polynomials
$P_n(x)$, $C_n(x)$,  $n=1, \ldots, m-1$, 
 and the number of polynomials in this sum is equal to $\min\{i,j,m-i+1,m-j+1\}$.  
Properties of polynomials $U_n(x)$, $P_n(x)$ and $C_n(x)$ will allow us to analyze positivity and contractivity of Crank Nicolson method in a quite simple way. In the next section, we study these properties. 
\begin{example}\label{Ex:m4}
In Example \ref{Ex:m3} we have considered the odd case $m=3$. Here, for completeness, we consider the even case $m=4$. A direct computation of the symmetric matrix $A_{4}$ in~\eqref{aeAt} gives
\begin{align*}
A_{4}(s)&=\frac{1}{u_4(s)}
\left(\begin{smallmatrix}
p_4(s) & 4 s\left(4+8 s+3 s^{2}\right) & 8 s^{2}(1+s) & 4 s^{3} \\
4 s\left(4+8 s+3 s^{2}\right) & \hspace{0.15cm}16+32 s+4 s^{2}-16 s^{3}-5 s^{4}  & 16 s(1+s)^{2} & 8 s^{2}(1+s) \\
8 s^{2}(1+s) & 16 s(1+s)^{2} &\hspace{-0.35cm}16+32 s+4 s^{2}-16 s^{3}-5 s^{4}  &\hspace{0.15cm} 4 s\left(4+8 s+3 s^{2}\right) \\
4 s^{3} & 8 s^{2}(1+s) & 4 s\left(4+8 s+3 s^{2}\right) & \hspace{0cm}p_4(s)
\end{smallmatrix}%
\right),
\end{align*}
where $p_4(s)=-5 s^4-24 s^3-4 s^2+32 s+16$
and  $u_4(s)=5 s^4+40 s^3+84 s^2+64 s+16$\,. 
Now, with the help of variable $x=1+1/s$, we can write

\begin{align*}
A_{4}(x)&=
\frac{1}{U_4(x)}\left(\begin{matrix}
{P_4(x)} & {C_3(x)}& {C_2(x)} & {C_1(x)} \\[1ex]
{C_3(x)} & {P_4(x)}+{C_2(x)}& {C_1(x)}+{C_3(x)} & {C_2(x)} \\[1ex]
{C_2(x)} & {C_1(x)}+{C_3(x)} &{P_4(x)}+{C_2(x)}& {C_3(x)} \\[1ex]
{C_1(x)} & {C_2(x)} & {C_3(x)}&{P_4(x)}
\end{matrix}\right). 
\end{align*}
Observe that there are two central rows  in the even case,  but just one in the odd case.
\end{example}

\subsection{Polynomials $U_n$, $P_n$ and $C_n$}\label{sec:polynomials}
In this section we define the new polynomials $P_n(x)$ and $C_n(x)$. Together with the help of
Chebyshev polynomials of   the second kind $U_n(x)$ \cite{gross2016root,liu2007unified,qi2019some}, we have got a simple way of writing Crank-Nicolson   matrix~$A_m(x)$. 
Besides, here we give some results concerning these polynomials, with particular interest in the distribution of their roots. Chebyshev polynomials of   the second kind $U_n(x)$ belong to a  general class of orthogonal polynomials and there are many works about the behaviour of their zeros  \cite{gross2016root,liu2007unified}. However, polynomials 
$P_n(x)$ and $C_n(x)$ do not belong to this class of orthogonal polynomials and, as far as we know, nothing is known about their roots.

\subsubsection*{Chebyshev polynomials of   second kind}
Chebyshev polynomial of   second kind of degree $n\ge 0$ is defined as
\[
U_n(x) = \frac{\sin ((n+1) \arccos x)}{\sin (\arccos x)}, \quad x\in [-1, 1]\,,
\]
or, in the angle variable $\omega$,
$U_n(\cos\omega) = {\sin ((n+1)\omega)}/{\sin\omega}$, \ 
$\omega\in[0,\pi]$\,.
These polynomials  can also be defined for any value of $x\in\mathbb{R}$ by the recurrence relation
\begin{align} 
U_0(x)&=1\,,\nonumber\\
U_1(x)&=2x\,,\nonumber\\
U_n(x)&=2x \, U_{n-1}(x)- U_{n-2}(x)\,.\label{chebyshev22}
\end{align}  
It is possible to write the  recurrence relation \eqref{chebyshev22}   in terms of the determinant of the tridiagonal matrix   $\text{tridiag}(1,2x,1)$   of dimension $n$.
\begin{equation} \label{chebyshev2det2}
U_n(x)= \left|\begin{array}{cccc}
2x &   1 &  &  \\
  1& 2x & \ddots& \\
   & \ddots &\ddots &1 \\
   && 1 & 2x
\end{array}\right|
\end{equation} 
Recall that each polynomial $U_n(x)$ has   $n$ roots 
$x_{i}^n=\cos \left({i\pi}/{(n+1)} \right)$\,,
\mbox{$i=1, \ldots, n$}, in the interval $[-1,1]$. These roots are uniformly distributed in the angle 
variable $\omega=\arccos x$ in the interval $[0,\pi]$. 
Notice that polynomials $U_n(x)$ defined in~\eqref{chebyshev22} are positive for $x>1$.

\subsubsection*{Polynomials $P_n$} 
If we change the first two elements  in the recursive  relation  
 \eqref{chebyshev22}, then a new family of polynomials  can be defined
\begin{align} 
P_0(x)&=-1\,,\nonumber\\
P_1(x)&=2x-4\,,\nonumber\\
P_n(x)&=2x\, P_{n-1}(x)- P_{n-2}(x) \,,\label{chebyshev22G}
\end{align} 
where $P_n(x)$ denotes the polynomial of degree $n$. In this case the recurrence relation \eqref{chebyshev22G} can also be written in terms of the determinant of a   matrix   
of dimension $n$.
\begin{equation} \label{chebyshev2det3}
P_n(x)= \left|\begin{array}{ccccc}
2x-4 &   -1 &  & & \\
  1& 2x & 1&& \\
   & \ddots &\ddots &\ddots  & \\
   &   &   1 & 2x&1  \\  
   &&& 1 & 2x
\end{array}\right|\,.
\end{equation}

In the next proposition we analyze  the roots of each   polynomial $P_n(x)$.  In the proof, shown in Section \ref{sec_proofs}, it is relevant the fact that 
each polynomial $P_n(x)$ can be written in terms of 
Chebyshev polynomials of   second kind 
\begin{equation} \label{PtoU}
P_n(x)=2U_{n-2}(x)-4U_{n-1}(x)+U_{n}(x)\,.
\end{equation} 
This equality is obtained by writing the 
determinant~\eqref{chebyshev2det3} in terms of the determinant~\eqref{chebyshev2det2}, and the use of the recurrence relation \eqref{chebyshev22}.

\begin{proposition}\label{proposition:p(x)}\mbox{}
The polynomial $P_n(x)$ defined in \eqref{chebyshev22G}   has  exactly $n-1$ roots $x^n_{i}$\,, $i=1, \ldots, n-1$, in the interval $(-1,1)$, and an additional isolated root $x_n:=x^n_{n}$  in the interval $(\frac{6+\sqrt{2}}{4},2]$. Furthermore,  the isolated root is $x_{n}=\cosh \omega_n$, where $\omega_n$ is the unique root of the equation 
\begin{equation}\label{greatexpressionh20}
\coth(n\omega)=\frac{3 \cosh\omega -4}{\sinh\omega}\,,\qquad   \omega\in (0,\infty)\,.
\end{equation}
Besides, $x_1=2$ and the sequence of isolated roots $(x_n)$ decreasingly converges to the limit value 
$x_\infty=({6+\sqrt{2}})/{4}$.
\end{proposition}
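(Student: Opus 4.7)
The plan is to rely on the Chebyshev expansion \eqref{PtoU}, which together with the three-term recurrence \eqref{chebyshev22} gives the compact form $P_n(x) = U_{n-2}(x) + (2x-4)U_{n-1}(x)$, and to treat the two regions $x \in (-1,1)$ and $x > 1$ via the standard substitutions $x = \cos\omega$ and $x = \cosh\omega$. For the interior I would evaluate $P_n$ at the zeros $y_k := \cos(k\pi/n)$, $k = 1, \ldots, n-1$, of $U_{n-1}$. There the term $(2x-4)U_{n-1}$ vanishes, so $P_n(y_k) = U_{n-2}(y_k) = \sin((n-1)k\pi/n)/\sin(k\pi/n) = (-1)^{k+1}$. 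Combined with $P_n(1) = 2(n-1) - 4n + (n+1) = -(n+1) < 0$, the alternating sign pattern along $1 > y_1 > \ldots > y_{n-1}$ yields $n-1$ sign changes, hence $n-1$ distinct roots of $P_n$ strictly inside $(-1,1)$ by the intermediate value theorem.

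For the isolated root in $(1,\infty)$, setting $x = \cosh\omega$ with $\omega > 0$, using $U_k(\cosh\omega) = \sinh((k+1)\omega)/\sinh\omega$ and the identity $\sinh((n-1)\omega) = \sinh(n\omega)\cosh\omega - \cosh(n\omega)\sinh\omega$, the equation $P_n(\cosh\omega) = 0$ reduces to $D_n(\omega) := \coth(n\omega)\sinh\omega - (3\cosh\omega - 4) = 0$, which is exactly \eqref{greatexpressionh20}. Then $D_n(0^+) = 1/n + 1 > 0$, while $D_n(\omega) \to -\infty$ as $\omega \to \infty$ because $-3\cosh\omega$ dominates, so a positive root $\omega_n$ exists. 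For uniqueness, I would differentiate, substitute the root equation to eliminate $\coth(n\omega_0)$, and simplify using $\cosh^2 - \sinh^2 = 1$ to obtain $D_n'(\omega_0) = -\frac{n\sinh\omega_0}{\sinh^2(n\omega_0)} + \frac{3 - 4\cosh\omega_0}{\sinh\omega_0}$; since the root equation itself forces $\cosh\omega_0 > 4/3$, both summands are negative and the root is unique. A degree count ($\deg P_n = n$) then excludes any further real roots, in particular any in $(-\infty, -1]$.

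To confine $\omega_n$ to $(\log(2+\sqrt{2}), \log(2+\sqrt{3})]$, I would evaluate $D_n$ at the endpoints. At $\omega^* := \log(2+\sqrt{3})$, where $\cosh\omega^* = 2$ and $\sinh\omega^* = \sqrt{3}$, one gets $D_1(\omega^*) = 0$, hence $\omega_1 = \omega^*$ and $x_1 = 2$; for $n \ge 2$ the strict monotonicity of $\coth$ gives $\coth(n\omega^*) < 2/\sqrt{3}$ and therefore $D_n(\omega^*) < 0$, so $\omega_n < \omega^*$. At $\omega_\infty := \log(2+\sqrt{2})$ a short computation yields $3\cosh\omega_\infty - 4 = \sinh\omega_\infty$, so $D_n(\omega_\infty) = [\coth(n\omega_\infty) - 1]\sinh\omega_\infty > 0$ for every $n$, forcing $\omega_n > \omega_\infty$. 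Monotonicity of $(\omega_n)$ comes from the fact that $n \mapsto \coth(n\omega)$ is strictly decreasing for every fixed $\omega > 0$: $D_{n+1}(\omega_n) < D_n(\omega_n) = 0$, which combined with $D_{n+1}'<0$ at its unique root gives $\omega_{n+1} < \omega_n$. The bounded monotone sequence $(\omega_n)$ converges to some $\omega^{**} \ge \omega_\infty$, and passing to the limit in $D_n(\omega_n) = 0$ (noting $\coth(n\omega) \to 1$) produces $\sinh\omega^{**} = 3\cosh\omega^{**} - 4$, whose only positive solution is $\omega_\infty$, so $x_n \searrow x_\infty = (6+\sqrt{2})/4$.

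The delicate step is the uniqueness of the positive root of $D_n$: the raw derivative $D_n'$ has no obvious sign, and one must feed the root equation back into $D_n'(\omega_0)$ and exploit $\cosh^2 - \sinh^2 = 1$ to expose the two negative summands in the expression above. Once uniqueness is in hand, localization, strict monotonic decrease of $(x_n)$, and the convergence to $(6+\sqrt{2})/4$ follow essentially for free from the strict monotonicity of $\coth$ together with the endpoint evaluations $D_n(\log(2+\sqrt 3))\le 0$ and $D_n(\log(2+\sqrt 2))>0$.
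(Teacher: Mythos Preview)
Your proof is correct and follows the same overall architecture as the paper (split into $x\in(-1,1)$ and $x>1$, use the substitutions $x=\cos\omega$ and $x=\cosh\omega$, and reduce to \eqref{greatexpressionh20}), but the two subarguments are handled differently.

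For the $n-1$ roots in $(-1,1)$, the paper rewrites $P_m(\cos\omega)=0$ as $\tan(m\omega)=\sin\omega/(3\cos\omega-4)$ and counts intersections by observing that the right side is bounded while $\tan(m\omega)$ has $m-1$ full branches on $(0,\pi)$. Your approach---evaluating $P_n$ at the zeros $y_k=\cos(k\pi/n)$ of $U_{n-1}$ via the compact form $P_n=U_{n-2}+(2x-4)U_{n-1}$ to get the alternating values $(-1)^{k+1}$, and closing the chain with $P_n(1)=-(n+1)<0$---is more elementary and avoids the graphical branch-counting; it also makes the simplicity of the roots immediate. For uniqueness of the root in $(1,\infty)$, the paper takes the shorter path: it notes that in \eqref{greatexpressionh20} the left side $\coth(n\omega)$ is strictly decreasing while the right side $g(\omega)=(3\cosh\omega-4)/\sinh\omega$ is strictly increasing (indeed $g'(\omega)=(4\cosh\omega-3)/\sinh^2\omega>0$), so they cross exactly once. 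Your route---showing $D_n'(\omega_0)<0$ at any root by back-substituting the root equation---also works but is a bit more laborious; you might prefer the monotone-versus-monotone argument in a revision. The endpoint evaluations at $\log(2+\sqrt2)$ and $\log(2+\sqrt3)$, the strict monotonicity of $(\omega_n)$ via $D_{n+1}(\omega_n)<0$, and the passage to the limit are all fine and slightly more explicit than what the paper writes.
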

\begin{figure}[ht]
\begin{picture}(0,120)(0,0)%
\put(110,-10){\includegraphics[scale=0.37]{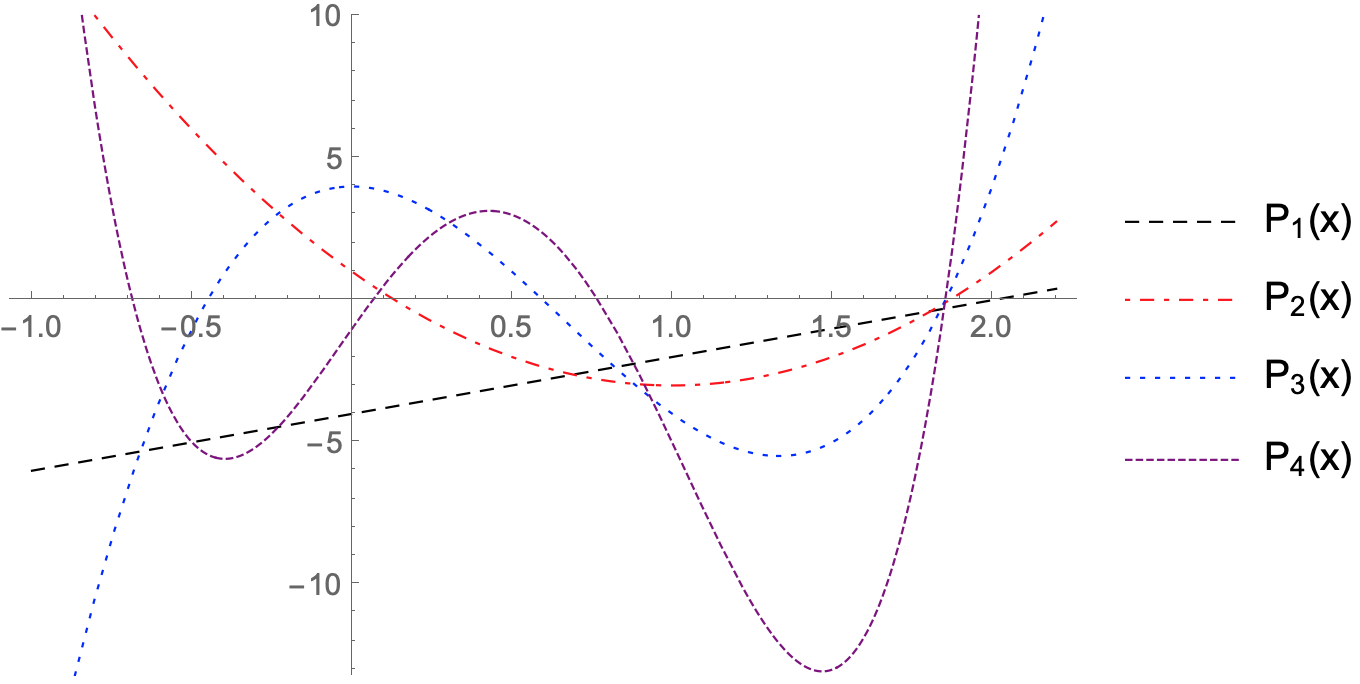}}
\end{picture}
\caption{Polynomials $P_n(x)\,,\ n=1,\ldots,4$.
Each polynomial has an isolated root in the interval $(\frac{6+\sqrt{2}}{4},2]$.}
\end{figure}

As it is shown in the proof (see Section \ref{subsec:rem}), for any value of $n$, the unique root $\omega_n$ of equation \eqref{greatexpressionh20} lies in the interval $(\omega_\infty,\omega_1]=(\log(2 + \sqrt{2}),\log(2 + \sqrt{3})]$. Consequently, the   isolated root 
 $x_{n}=\cosh \omega_n$ of polynomial $P_n(x)$ lies in the narrow interval 
 $(\frac{6+\sqrt{2}}{4},2]$. Having the root well located makes it  easy to approach it by any numerical method. In Table \ref{Tab:pos} we show some of these roots after 10 steps with bisection method.

\subsubsection*{Polynomials $C_n$}   
The polynomial $C_n$ of degree $n$ is defined as
\begin{equation} \label{eq:polCm}
C_n(x)=P_n(x)+U_n(x)\,,\qquad n\in \mathbb{N},
\end{equation}
where $P_n$ is the polynomial of degree $n$ defined above and 
$U_n$ is the Chebyshev polynomial of   second kind of degree $n$. 
Consequently, all the properties of $C_n$ are consequence of this definition, including its recursive definition
\begin{align} 
C_0(x)&=0\,,\nonumber\\
C_1(x)&=4(x-1)\,,\nonumber\\
C_n(x)&=2x\, C_{n-1}(x)-  C_{n-2}(x) \,.\label{chebyshev22R}
\end{align} 
Observe that the recursive formula 
\eqref{chebyshev22R} is the same as \eqref{chebyshev22G}  for $P_n$ and \eqref{chebyshev22} for $U_n$, with just different starting values $C_0$ and $C_1$. As in previous cases, it is worth writing $C_n(x)$ in terms of a determinant
\begin{equation} \label{chebyshev2det4}
C_n(x)= \left|\begin{array}{ccccc}
4(x-1) & 0 &  & & \\
  1& 2x & 1&& \\
   & \ddots &\ddots &\ddots  & \\
   &   &   1 & 2x&1  \\  
   &&& 1 & 2x
\end{array}\right|\,.
\end{equation}

\begin{proposition}\label{proposition:C(x)}
Each polynomial $C_n(x)$ has exactly $n-1$ roots $x^n_{i}$\,, $i=1, \ldots, n-1$, in the interval $(-1,1)$, and the additional isolated root $x_n=1$. 
\end{proposition}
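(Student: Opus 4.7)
The plan is to use the trigonometric (angle-variable) representation of $C_n$, as was done implicitly for $U_n$ in this section, to expose its roots in closed form.

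First I would invoke the definition \eqref{eq:polCm}, $C_n=P_n+U_n$, combined with the identity \eqref{PtoU}, to rewrite
\[
C_n(x)=2U_{n-2}(x)-4U_{n-1}(x)+2U_n(x)\,.
\]
This is the crucial reduction: it expresses $C_n$ purely in terms of Chebyshev polynomials of the second kind, for which the substitution $x=\cos\omega$ gives the standard formula $U_k(\cos\omega)\sin\omega=\sin((k+1)\omega)$.

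Next I would plug in $x=\cos\omega$ with $\omega\in[0,\pi]$ and simplify using the product-to-sum identity $\sin((n-1)\omega)+\sin((n+1)\omega)=2\sin(n\omega)\cos\omega$. A short computation yields the compact expression
\[
C_n(\cos\omega)\,\sin\omega\;=\;4\,\sin(n\omega)\,(\cos\omega-1)\,,
\]
which is the heart of the proof. From this factorised form the zeros are read off directly: for $\omega\in(0,\pi)$ we have $\sin\omega>0$, so the zeros in that range come exactly from $\sin(n\omega)=0$, i.e.\ $\omega=k\pi/n$ with $k=1,\ldots,n-1$; these produce $n-1$ distinct values $x=\cos(k\pi/n)\in(-1,1)$. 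Moreover, evaluating at $x=1$ directly using the Chebyshev identities $U_k(1)=k+1$ gives $C_n(1)=2(n-1)-4n+2(n+1)=0$, so $x=1$ is an additional root.

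Finally I would do the degree count: we have produced $n-1+1=n$ distinct roots and $\deg C_n=n$, so these are all the roots (each simple) and there are no others. The claim that $x=1$ is an \emph{isolated} root then reduces to checking $1\notin\{\cos(k\pi/n):1\le k\le n-1\}$, which is immediate since $k\pi/n\in(0,\pi)$. I do not expect any real obstacle here; the only subtle point is the passage from the identity \eqref{PtoU} to the trigonometric factorisation, after which the enumeration of roots is mechanical.
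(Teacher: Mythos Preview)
Your proof is correct. The paper takes a more direct route: expanding the determinant \eqref{chebyshev2det4} along its first row (which reads $(4(x-1),0,\ldots,0)$) immediately yields the polynomial factorisation $C_n(x)=4(x-1)\,U_{n-1}(x)$, valid for all $x\in\mathbb{R}$, from which the roots are read off without any trigonometric substitution or degree count. Your identity $C_n(\cos\omega)\sin\omega=4\sin(n\omega)(\cos\omega-1)$ is exactly this factorisation rewritten in the angle variable, so the two arguments coincide in substance; the paper's derivation simply avoids the detour through the combination $2U_{n-2}-4U_{n-1}+2U_n$, the separate verification at $x=1$, and the concluding degree count.
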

\begin{proof}
It is straightforward if we use the determinant 
\eqref{chebyshev2det4}, where we get
\begin{equation}\label{CtoU}
C_n(x)=4(x-1)U_{n-1}(x)
\end{equation}
Consequently the roots of polynomial $C_n$  are 
 the isolated root $x_n=1$, and the  $n-1$ roots of the 
Chebyshev polynomial  of   second kind of degree ${n-1}$.
\end{proof}

 \begin{corollary}\label{Cpositive}
If $x>1$, then $C_n(x)>0\ \forall n\in\mathbb{N}$\,.
\end{corollary}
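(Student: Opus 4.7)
The plan is to derive the statement directly from the factorization of $C_n(x)$ obtained in the proof of Proposition~\ref{proposition:C(x)}, combined with the positivity property of Chebyshev polynomials of the second kind noted right after the recurrence \eqref{chebyshev22}.

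First, I would invoke the closed form $C_n(x)=4(x-1)\,U_{n-1}(x)$ given in \eqref{CtoU}. This reduces the positivity of $C_n(x)$ for $x>1$ to the positivity of the two factors $4(x-1)$ and $U_{n-1}(x)$ on the same range. The first factor is obviously strictly positive when $x>1$, so the whole argument collapses onto establishing that $U_{n-1}(x)>0$ for $x>1$ and every $n\in\mathbb{N}$.

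Next, I would justify the positivity of $U_{n-1}(x)$ on $(1,\infty)$. The paper already records this fact in the paragraph following \eqref{chebyshev2det2}, but for completeness I would sketch a one-line inductive verification: from the recurrence \eqref{chebyshev22} one has $U_0(x)=1>0$ and $U_1(x)=2x>0$ for $x>1$, and if $U_{k-2}(x),U_{k-1}(x)>0$ then $U_k(x)=2xU_{k-1}(x)-U_{k-2}(x)$. To see this remains positive it suffices to observe that $U_k(x)\geq (2x-1)U_{k-1}(x)>U_{k-1}(x)$ whenever $U_{k-1}(x)\geq U_{k-2}(x)$, which is itself established inductively starting from $U_1(x)=2x>1=U_0(x)$ for $x>1$. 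Thus $(U_n(x))_{n\geq 0}$ is a strictly increasing sequence of positive numbers for each fixed $x>1$.

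Combining these two observations yields $C_n(x)=4(x-1)\,U_{n-1}(x)>0$ for every $n\in\mathbb{N}$ and every $x>1$, which is exactly the assertion. There is no real obstacle: the bulk of the work was already done in Proposition~\ref{proposition:C(x)}, and the corollary is essentially a bookkeeping statement that isolates the information needed later (in particular, to guarantee non-negativity of the off-diagonal blocks appearing in the representations \eqref{Aimpar}--\eqref{Apar} of $A_m(x)$ when the change of variable $x=1+1/s$ with $s>0$ forces $x>1$).
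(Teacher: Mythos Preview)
Your argument is correct and follows exactly the paper's route: the proof in the paper simply says ``straightforward from the previous proposition,'' which amounts to combining the factorization \eqref{CtoU} with the positivity of $U_{n-1}(x)$ for $x>1$ already noted after \eqref{chebyshev2det2}. Your additional inductive verification of the latter is a harmless elaboration of a fact the paper takes as known.
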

\begin{proof}   It is straightforward from the previous proposition.
 \end{proof}

In the following Lemma we give some technical  properties of polynomials $C_n(x)$ that we will need in Section \ref{sec_proofs}.

\begin{lemma}\label{lemma:C(x)} 
For the polynomial $C_n$ of degree $n$  defined in \eqref{eq:polCm} or \eqref{chebyshev22R} the following properties hold:
\begin{enumerate}
	\item\label{unoo} $  C_n(1)=0\,,  
	\forall n\in\mathbb{N}\,.$
         \item \label{doss} $C_n(x)=2U_{n-2}(x)
         -4U_{n-1}(x)+2U_{n}(x)\,,
	\forall n\in\mathbb{N}\,.$
	\item \label{tress} If $x>1$\,, then \  
	$0<C_n(x)\le C_{n+1}(x)\,,
	  \quad \forall n\in\mathbb{N}\,.$    
	 \item \label{cuatroo}   If $x>1$\,, then \  
	 $2C_{n}(x)\le C_{n-1}(x)+C_{n+1}(x)\,, 
	 \quad \forall n\in\mathbb{N}$\,.
\end{enumerate}	
\end{lemma}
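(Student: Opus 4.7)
The plan is to reduce everything to the compact factorization $C_n(x)=4(x-1)U_{n-1}(x)$ that has already been established in \eqref{CtoU} during the proof of Proposition \ref{proposition:C(x)}, and then exploit the Chebyshev recurrence \eqref{chebyshev22} for the two inequality parts.

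First, Part \ref{unoo} is immediate from \eqref{CtoU}: evaluating at $x=1$ gives $C_n(1)=4\cdot 0\cdot U_{n-1}(1)=0$. For Part \ref{doss}, I would combine the definition $C_n(x)=P_n(x)+U_n(x)$ with the already proven identity \eqref{PtoU}, namely $P_n(x)=2U_{n-2}(x)-4U_{n-1}(x)+U_n(x)$, which yields $C_n(x)=2U_{n-2}(x)-4U_{n-1}(x)+2U_n(x)$ directly.

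For Part \ref{tress}, a short induction on \eqref{chebyshev22} shows that for $x>1$ one has $U_n(x)>U_{n-1}(x)>0$ for every $n\ge 1$: the base case $U_1(x)=2x>1=U_0(x)$ is obvious, and if $U_{n-1}>U_{n-2}>0$, then the recurrence gives $U_n=2xU_{n-1}-U_{n-2}>2U_{n-1}-U_{n-2}>U_{n-1}$. Hence $U_{n-1}(x)>0$ and therefore $C_n(x)=4(x-1)U_{n-1}(x)>0$; moreover $C_{n+1}(x)-C_n(x)=4(x-1)\bigl(U_n(x)-U_{n-1}(x)\bigr)\ge 0$, which gives the monotonicity.

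For Part \ref{cuatroo}, the key observation is the telescoping that comes from the three-term recurrence \eqref{chebyshev22}: $U_{n-2}(x)+U_n(x)=2xU_{n-1}(x)$, so
\begin{equation*}
C_{n-1}(x)+C_{n+1}(x)-2C_n(x)=4(x-1)\bigl(U_{n-2}(x)+U_n(x)-2U_{n-1}(x)\bigr)=8(x-1)^2 U_{n-1}(x),
\end{equation*}
which is non-negative for $x>1$ since $U_{n-1}(x)>0$ by the induction above. There is essentially no obstacle in this lemma once \eqref{CtoU} and \eqref{PtoU} are in hand; the only mildly nontrivial step is the auxiliary induction certifying $U_n(x)>U_{n-1}(x)>0$ for $x>1$, which underpins both inequality parts.
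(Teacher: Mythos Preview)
Your proof is correct. Parts~\ref{unoo} and~\ref{doss} coincide with the paper's argument (both are declared straightforward there, via \eqref{CtoU} and \eqref{PtoU}). The difference lies in Parts~\ref{tress} and~\ref{cuatroo}: the paper passes to the hyperbolic parametrization $x=\cosh\omega$, rewrites $C_n(\cosh\omega)=4(\cosh\omega-1)\sinh(n\omega)/\sinh\omega$, and then reads off monotonicity from $\sinh(n\omega)<\sinh((n+1)\omega)$ and convexity from the addition identity $\sinh((n-1)\omega)+\sinh((n+1)\omega)=2\cosh\omega\,\sinh(n\omega)\ge 2\sinh(n\omega)$. You instead stay purely algebraic, proving $U_n(x)>U_{n-1}(x)>0$ for $x>1$ by induction on the recurrence \eqref{chebyshev22}, and then obtaining Part~\ref{cuatroo} from the exact identity $C_{n-1}+C_{n+1}-2C_n=8(x-1)^2U_{n-1}$. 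Your route avoids introducing the angle variable and even yields a sharper, quantitative form of the convexity inequality; the paper's route is slightly quicker once the hyperbolic formula is in hand and dovetails with the hyperbolic machinery used later in Section~\ref{sec_proofs}.
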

\begin{proof} Part  \ref{unoo} is straightforward. 
Part \ref{doss} is also straightforward if we use 
 the  relationship~\eqref{PtoU}
and definition \eqref{eq:polCm}.

To prove part \ref{tress},  as $x>1$, we set
	$x=\cosh\omega$ in \eqref{CtoU}, 
	to obtain
\begin{equation}\label{greatexpressionRR}
C_n(\cosh\omega)=  
 \frac{4(\cosh\omega -1)\sinh (n\omega) }{\sinh \omega}>0 \,. 
\end{equation}
As 	${\sinh \omega}$ is an increasing function, it holds $\sinh (n\omega)<\sinh ((n+1)\omega)$, and consequently $C_n(x)\le C_{n+1}(x)$\,. 

Finally, to prove \ref{cuatroo}, as
	\[
	\sinh((n-1)\omega)+\sinh((n+1)\omega)=2\cosh\omega\sinh(n\omega)\ge 2\sinh(n\omega)\,,
	\]
we can use again \eqref{greatexpressionRR},  for $x>1$, to obtain that $2C_{n}(x)\le C_{n-1}(x)+C_{n+1}(x)$
	  $\forall n\in\mathbb{N}$.	
\end{proof}

\section{Proofs of theorems in Sections \ref{sec:main} and \ref{sec:preliminar}}\label{sec_proofs}

In this section we give the proofs of  the main results in the paper.

\subsection{Positivity of the Crank-Nicolson method}
Recall that the  Crank Nicolson method is positive if and only if all the elements in matrix $A_m$ \eqref{Aimpar}-\eqref{Apar} are positive.  The following lemma simplifies the proof of Theorem   \ref{Th:pos}. 

\begin{lemma}\label{lemma_Pm}
If $x>1$, then all the elements in matrix $A_m(x)$ \eqref{Aimpar}-\eqref{Apar} are non-negative if and only if polynomial $P_{m}(x)$ is non-negative.
\end{lemma}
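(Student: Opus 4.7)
The plan is to exploit the positivity of the common denominator $U_m(x)$ and of the polynomials $C_n(x)$ on $(1,\infty)$, so that the sign of every entry of $A_m(x)$ reduces either to the sign of $P_m(x)$ or to the automatically non-negative sign of a sum of $C_n$'s.

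The starting point is to record two facts valid for $x>1$: first, $U_m(x)>0$ (as noted right after \eqref{chebyshev22}); second, $C_n(x)>0$ for every $n\in\mathbb{N}$ by Corollary~\ref{Cpositive}. Since every entry of $A_m(x)$ in \eqref{Aimpar}--\eqref{Apar} has the common denominator $U_m(x)$, the sign of an entry agrees with the sign of its numerator.

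For the implication $(\Leftarrow)$, I would assume $P_m(x)\ge 0$ and inspect the numerators in \eqref{Aimpar} and \eqref{Apar}. Each numerator is of one of the following forms: a single $C_n(x)$; a sum $C_j(x)+C_k(x)$; the polynomial $P_m(x)$ itself (on the corners of the main diagonal); or $P_m(x)$ plus a sum of $C_n(x)$'s (on the remaining main-diagonal entries, including the one or two central ones). In every case the numerator is a sum of non-negative quantities under the standing hypothesis, and non-negativity of the entry follows. For the reverse implication $(\Rightarrow)$, it is enough to look at the $(1,1)$ corner entry, which equals $P_m(x)/U_m(x)$; since $U_m(x)>0$, its non-negativity forces $P_m(x)\ge 0$.

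The only step that needs real care is the bookkeeping: verifying that $P_m$ appears exclusively on the main diagonal of \eqref{Aimpar}--\eqref{Apar} while all off-diagonal numerators involve only $C_n$'s, and that the coefficient of $P_m$ wherever it appears is $+1$ (never combined with a minus sign). This follows from the explicit block description given just after \eqref{Apar}. No deeper property of $P_m$ (such as the root location supplied by Proposition~\ref{proposition:p(x)}) is used; the lemma is purely structural, and its role is to reduce the matrix-positivity question to the scalar question ``is $P_m(x)\ge 0$?''.
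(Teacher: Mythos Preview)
Your argument is correct and matches the paper's own proof: both rely on $U_m(x)>0$ and $C_n(x)>0$ for $x>1$ (Corollary~\ref{Cpositive}) to conclude that all off-diagonal entries are automatically positive, and then observe that the diagonal entries are $P_m$ plus a non-negative sum of $C_n$'s, so the minimal diagonal entry is $A_m^{(1,1)}=P_m/U_m$. The paper phrases the second step slightly differently (it identifies $\min_i A_m^{(i,i)}=P_m/U_m$ rather than splitting into two implications), but the content is identical.
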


\begin{proof}
Recall that polynomials $U_n(x)$ are positive for $x > 1$. 
Polynomials $C_n(x)$ are also
positive for $x>1$ (see Corollary \ref{Cpositive}). 
Consequently, all extra-diagonal elements are positive.

For the diagonal elements $A_{m}^{(i,i)}(x)$, the positivity of polynomials $U_n(x)$ and $C_n(x)$ for all $n\in\mathbb{N}$ implies that 
\[
\min_i A_{m}^{(i,i)}(x)=A_{m}^{(1,1)}(x)=A_{m}^{(m,m)}(x)=\frac{P_{m}(x)}{U_{m}(x)}.
\]
Then the analysis of the positivity of the elements in  $A_m(x)$ is reduced to the
positivity of polynomial $P_{m}(x)$.
\end{proof}

\subsubsection*{Proof of Theorem \ref{Th:pos}}
\begin{enumerate}  
\item 
From Lemma \ref{lemma_Pm}, we just have to study the positivity of polynomials $P_m(x)$ for $x>1$.  From Proposition \ref{proposition:p(x)}, $P_m(x)$ has  exactly $m-1$ roots in the interval $(-1,1)$ and an additional isolated real root 
$x_m^{(p)}=\cosh(\omega_m^{(p)})$ in the interval $(\frac{6+\sqrt{2}}{4},2]$, where $\omega_m^{(p)}$ is the unique root of equation \eqref{greatexpressionh20}, that is the same as \eqref{wroot_pos}. As  $x=1+1/s$, then 
the CFL coefficient is  $s=1/(x-1)$, and  inequality \eqref{sCFL_pos} is obtained. 

\item 
In the limit case, Proposition \ref{proposition:p(x)} gives us the limit value 
$x_\infty^{(p)}=({6+\sqrt{2}})/{4}$. Then  for the CFL coefficient we get the bound $s_\infty^{(p)}=1/(x_\infty^{(p)}-1)=2(2-\sqrt{2})$, and  inequality~\eqref{cota_positividad} is obtained. \qed
\end{enumerate}

\subsection{Contractivity of the Crank-Nicolson method.}  
Before writing  the proof of Theorem \ref{Th:con}, we need  two technical lemmas. In Lemma \ref{proposition_norm_inf} below, we compute the maximum norm of matrix $A_{m}(x)$  in terms of polynomials  $U_n(x)$, $P_n(x)$ and   $C_n(x)$.  
Then, in  Lemma~\ref{theorem_norm_inf} we get 
the inequalities needed for the contractivity condition $\norm{A_m}_\infty\le 1$. In contrast to positivity, in the analysis of the contractivity it is necessary to distinguish between the even and odd cases.

\begin{lemma}\label{proposition_norm_inf}\mbox{}
\begin{enumerate}	
	\item If $m\ge 3$ is a natural odd number, then the 
	maximum norm of Crank-Nicolson matrix~is 
\begin{equation}\label{maxnormodd}
	\norm{A_m}_\infty =\tfrac{1}{U_m}\left( \abs{P_m+\sum_{n=1}^{\frac{m-1}{2}}C_{2n-1}}+ 2\sum_{i=0}^{\frac{m-3}{2}} \sum_{n=1}^{\frac{m-(2 i+1)}{2}} C_{2 n+i}   \right)\,.
\end{equation}
	\item If $m\ge 2$ is a natural even number, 
	 then the norm of Crank-Nicolson matrix~is \begin{equation}\label{maxnormeven}
\norm{A_m}_\infty = \tfrac{1}{U_m}\left( \abs{P_m+\sum_{n=1}^{\frac{m}{2}-1}C_{2n}}+ \sum_{i=1}^{\frac{m}{2}-1} \sum_{n=1+i}^{m-i} {C}_n+\sum_{n=1}^{m / 2} {C}_{2n-1}   \right)\,.
\end{equation}
\end{enumerate}	 
\end{lemma}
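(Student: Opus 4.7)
The plan is to write $\|A_m\|_\infty=\tfrac{1}{U_m(x)}\max_k r_k$, where $r_k$ is the sum of absolute values of the entries in row $k$ of the numerator of $A_m$ in \eqref{Aimpar}--\eqref{Apar}, then show that this maximum is attained at the single middle row for odd $m$ and at either of the two central rows for even $m$, and finally read off the corresponding row sum.

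For $x>1$ we have $U_m(x)>0$ and, by Corollary \ref{Cpositive}, $C_n(x)>0$ for every $n\in\mathbb{N}$. Hence all off-diagonal entries in \eqref{Aimpar}--\eqref{Apar} are positive, and the only entries needing an absolute value are the diagonal ones, which have the form $P_m+D_k$, where inspection of the diagonal in \eqref{Aimpar}--\eqref{Apar} gives $D_1=0$ and $D_k=\sum_{n=1}^{k-1}C_{m-2n}$ for $k\le\lceil(m+1)/2\rceil$. Thus $r_k=|P_m+D_k|+\Sigma_k$, with $\Sigma_k$ the non-negative sum of off-diagonal entries in row $k$. The bisymmetry of $A_m$ gives $r_k=r_{m-k+1}$, so it suffices to analyse $k=1,\dots,\lceil(m+1)/2\rceil$.

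The core step is to prove the monotonicity $r_{k+1}\ge r_k$ in that range. Using the description of the $(k,j)$-entry (for $k\neq j$) as $\sum_{l=0}^{t-1}C_{m-|k-j|-2l}$ with $t=\min\{k,j,m-k+1,m-j+1\}$, one derives
\begin{equation*}
r_{k+1}-r_k=\bigl(|P_m+D_{k+1}|-|P_m+D_k|\bigr)+\Delta_k,
\end{equation*}
where $\Delta_k=\Sigma_{k+1}-\Sigma_k$ is an explicit linear combination of $C_n$'s and $D_{k+1}-D_k=C_{m-2k}$. A case split on the signs of $P_m+D_k$ and $P_m+D_{k+1}$ then reduces every scenario to a $C_n$-inequality that follows from parts \ref{tress}--\ref{cuatroo} of Lemma \ref{lemma:C(x)}, namely the monotonicity $C_n\le C_{n+1}$ and the discrete convexity $2C_n\le C_{n-1}+C_{n+1}$ (both valid for $x>1$). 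Once monotonicity is established, the maximum row sum is $r_{(m+1)/2}$ for odd $m$ and $r_{m/2}=r_{m/2+1}$ for even $m$; its diagonal contribution is $|P_m+\sum_{n=1}^{(m-1)/2}C_{2n-1}|$ respectively $|P_m+\sum_{n=1}^{m/2-1}C_{2n}|$, and grouping the middle row's off-diagonal entries by their distance from the diagonal yields the remaining double sums in \eqref{maxnormodd}--\eqref{maxnormeven}.

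The main obstacle will be the mixed-sign case $P_m+D_k\le 0\le P_m+D_{k+1}$, in which $|P_m+D_{k+1}|-|P_m+D_k|=2P_m+D_k+D_{k+1}$ is typically negative and must be absorbed by $\Delta_k$; this requires combining the case-hypothesis bound $|P_m|\le D_{k+1}$ with the discrete convexity of $C_n$. The parity split also makes the odd- and even-$m$ computations structurally different but conceptually parallel, so once the key inequality is verified in one parity the other will follow by an analogous bookkeeping.
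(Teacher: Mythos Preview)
Your proposal is correct and follows essentially the same route as the paper's proof: both argue that the row sums $r_k$ increase toward the central row(s) by bounding $r_{k+1}-r_k$ using only the positivity, monotonicity, and discrete convexity of the $C_n$ from Lemma~\ref{lemma:C(x)}, and then read off the central row sum. The one tactical difference is that the paper avoids your sign case split by a single triangle-inequality estimate, writing $|P_m+D_k|\le |P_m+D_{k+1}|+(D_{k+1}-D_k)=|P_m+D_{k+1}|+C_{m-2k}$, which immediately reduces $r_k-r_{k+1}$ to the pure $C_n$-inequality $2C_{m-2k}\le \Delta_k+C_{m-2k}$ handled by convexity; this subsumes your mixed-sign case without invoking the hypothesis bound $|P_m|\le D_{k+1}$.
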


\begin{proof} 
Note that the norm in \eqref{maxnormodd} is obtained from the sum of the elements in the central row of $A_m$ (odd case), while the norm in 
\eqref{maxnormeven}  is obtained from the sum of the elements in any of the two symmetric central 
rows (even case). To get this result we will proof the following inequalities
\[
\hspace{3cm}
\sum_{j=1}^m \abs{A_m^{(i,j)}}\le \sum_{j=1}^m \abs{A_m^{(i+1,j)}}\,,\quad i=1,\ldots,\tfrac{m-1}{2}\text{(odd case)}\,   \tfrac{m-2}{2}\text{(even case)}.
\]
For the sake of simplicity we will denote $A_m^i$ to the sum
$\sum_{j=1}^m \abs{A_m^{(i,j)}}{U_m}$. 
Observe that the symmetry of matrices (\ref{Aimpar}-\ref{Apar}) makes $A_m^i= A_m^{m+1-i}$\,, $i=1\,,\ldots\,,(m-1)/2$ (odd case) or   $i=1\,,\ldots\,,m/2$ (even case).
Then, we will proof that
\begin{align*}
&A_m^1\le A_m^2\le \cdots \le A_m^{\frac{m-1}{2}}\le A_m^{\frac{m+1}{2}}\qquad \hspace{1.1cm}\text{(odd case)}\\
&A_m^1\le A_m^2\le \cdots \le A_m^{\frac{m-2}{2}}\le A_m^{\frac{m}{2}}=A_m^{\frac{m+2}{2}}\qquad \text{(even case)}
\end{align*} 
Consequently, $\norm{A_m}_\infty$  is obtained from the sum of the elements in the central row (odd case) or  from the sum of the elements in any of the two  central rows (even case).

Recall that for a given matrix $A$ with positive  extra-diagonal elements the following equality trivially  holds
\begin{equation}\label{maxnorm}
\norm{A}_\infty =\max_i\left(\sum_{j=1}^m \abs{a_{ij}}\right)=\max_i\left(\abs{a_{ii}}+\sum_{j\ne i} a_{ij}\right)\,.
\end{equation}	
In our case   $C_n(x)\ge 0$ for  $x\ge 1$ (see Proposition \ref{proposition:C(x)}). Consequently all extra-diagonal elements of matrices  
	\eqref{Aimpar} and \eqref{Apar}  are non-negative and we can use \eqref{maxnorm} to compute~$\norm{A_m}_\infty$.

For the  first and the second row of matrix  \eqref{Aimpar} or \eqref{Apar}  we get the sums  
\[
A_m^1=\abs{P_m}+\sum_{i=1}^{m-1}{C_i}\,,\qquad 
A_m^2=\abs{P_m+C_{m-2}}+\sum_1^{m-1}{C_i}+\sum_2^{m-1}{C_i}-{C_{m-2}}\,.
\]  
Then, after cancelling terms, the difference $A_m^1-A_m^2$ is  
\begin{align*}
A_m^1-A_m^2&=
\abs{P_m}-\abs{P_m+C_{m-2}}-\sum_2^{m-1}{C_i}+
{C_{m-2}}\,.\\
\intertext{Adding and subtracting $C_{m-2}$ in the term $\abs{P_m}$, we can write}
A_m^1-A_m^2&\le \abs{P_m+C_{m-2}}+C_{m-2}-\abs{P_m+C_{m-2}}-\sum_2^{m-1}{C_i}+
{C_{m-2}}\\
&=2{C_{m-2}}-\sum_2^{m-1}{C_i}\le
 {C_{m-3}}+{C_{m-1}}-\sum_2^{m-1}{C_i}=
  -\sum_2^{m-4}{C_i}-{C_{m-2}}\le 0\,,
\end{align*}  
{where we have used  $2{C_{m-2}} \le
 {C_{m-3}}+{C_{m-1}}$ from item  \eqref{cuatroo} in Lemma \ref{lemma:C(x)}. Observe that property~\eqref{tress} in Lemma \ref{lemma:C(x)}    allows us to finally write} 
\begin{equation}\label{eq:A1A2dif}
A_m^1-A_m^2  \le 
  -\sum_2^{m-3}{C_i}\le 0\,.
\end{equation}
If we proceed in the same way for the 
difference $A_m^2-A_m^3$, after cancelling terms, and after adding and subtracting $C_{m-4}$ in the term $\abs{P_m+C_{m-2}}$, we get  the following inequality   
\begin{align*}
A_m^2-A_m^3&\le 2C_{m-4}-\sum_3^{m-2}{C_i} \,.\\
\intertext{Again, the use of  properties \eqref{cuatroo} and  \eqref{tress}, in this order,    from Lemma \ref{lemma:C(x)} makes it possible to write an inequality analogous to \eqref{eq:A1A2dif}}
A_m^2-A_m^3& \le  C_{m-5}+C_{m-3}
  -\sum_3^{m-2}{C_i}\le   -\sum_3^{m-4}{C_i}\le 0\,.
\end{align*}

The proof follows in the same way for the odd and even case up to the last step when we achieve the central  row (odd case) or  the two central rows (even case). Then we have to consider two different cases:
\begin{enumerate}	
	\item If $m$ is an odd number, the last step consists in studying the difference 
	$A_m^{\frac{m-1}{2}}-A_n^\frac{m+1}{2}$, 
	where $A_n^\frac{m+1}{2}$ represents the sum of the elements in the central row. After cancelling terms,
we can write
\begin{align*}
A_m^{\frac{m-1}{2}}-A_m^\frac{m+1}{2}&=\abs{P_m+\sum_{n=2}^{\frac{m-1}{2}}C_{2n-1}}+C_1-\abs{P_m+\sum_{n=1}^{\frac{m-1}{2}}C_{2n-1}}-C_{\frac{m+1}{2}}\\
\intertext{Adding and subtracting $C_{1}$ in the term $\abs{P_m+\sum_{n=2}^{\frac{m-1}{2}}C_{2n-1}}$, we get}
A_m^{\frac{m-1}{2}}-A_m^\frac{m+1}{2}&  \le \abs{P_m+\sum_{n=1}^{\frac{m-1}{2}}C_{2n-1}}+2C_1-\abs{P_m+\sum_{n=1}^{\frac{m-1}{2}}C_{2n-1}}-
C_{\frac{m+1}{2}}\\
&=2C_1-C_{\frac{m+1}{2}}\le  C_0+C_2-C_{\frac{m+1}{2}}=C_2-C_{\frac{m+1}{2}}\le 0\,,
\end{align*}
where we have used  $2{C_{1}} \le
 {C_{0}}+{C_{2}}$ from  property \eqref{cuatroo} in Lemma \ref{lemma:C(x)}.

Consequently, if     $m\ge 3$ is an odd number, the maximum value of $A_m^i$ is obtained in the central row ${A_m^\frac{m+1}{2}}$ and we can conclude 
\[
\norm{A_m}_\infty =
\frac{A_m^\frac{m+1}{2}}{U_m}=
 \frac{1}{U_m}\left( \abs{P_m+\sum_{n=1}^{\frac{m-1}{2}}C_{2n-1}}+ 2\sum_{i=0}^{\frac{m-3}{2}} \sum_{n=1}^{\frac{m-(2 i+1)}{2}} C_{2 n+i}   \right)\,.
\]  
 
\item If $m$ is an even number, then there is not a central row but two central symmetric rows 
	$A_m^\frac{m}{2}$ and $A_m^\frac{m+2}{2}$, and the maximum value is obtained at any of these two files. Now, in the last step of the proof,  if  $m\ge 4$, we have to write  the difference \mbox{$A_m^{\frac{m-2}{2}}-A_m^\frac{m}{2}$}. Note that for the simple case $m=2$, it holds $A_m^{1}=A_m^{2}$. After cancelling terms, we can write
\begin{align*}
A_m^{\frac{m-2}{2}}-A_m^\frac{m}{2}&=
\abs{P_m+\sum_{n=2}^{\frac{m}{2}-1}C_{2n}}+C_2   -\abs{P_m+\sum_{n=1}^{\frac{m}{2}-1}C_{2n}}-C_{\frac{m}{2}}-C_{\frac{m+2}{2}}  \\
\intertext{Adding and subtracting $C_{2}$ in the term $\abs{P_m+\sum_{n=2}^{\frac{m}{2}-1}C_{2n}}$, we get}
A_m^{\frac{m-2}{2}}-A_m^\frac{m}{2} &\le \abs{P_m+\sum_{n=1}^{\frac{m}{2}-1}C_{2n}}+2C_2   -\abs{P_m+\sum_{n=1}^{\frac{m}{2}-1}C_{2n}}-C_{\frac{m}{2}}-C_{\frac{m+2}{2}}\\
&=2C_2-C_{\frac{m}{2}}-C_{\frac{m+2}{2}}\le  C_1+C_3-C_{\frac{m}{2}}-C_{\frac{m+2}{2}} \le 0\,,
\end{align*}
where we have used  $2{C_{2}} \le
 {C_{1}}+{C_{3}}$ from  property \eqref{cuatroo} in Lemma \ref{lemma:C(x)}.

Consequently, if     $m$ is an even number, the maximum value of $A_m^i$ is obtained in the  row ${A_m^\frac{m}{2}}$ and we can conclude 
\[
\mbox{}\!\!\!\!\norm{A_m}_\infty =\max_i\left(\abs{a_{ii}}+\sum_{j\ne i} a_{ij}\right)= \tfrac{1}{U_m}\left( \abs{P_m+\!\!\sum_{n=1}^{\frac{m}{2}-1}\!\!C_{2n}}+\!\! \sum_{i=1}^{\frac{m}{2}-1} \sum_{n=1+i}^{m-i} \!\!{C}_n+\!\!\sum_{n=1}^{m / 2}\!\! {C}_{2n-1}   \right)\!\!\qed
\] 
\end{enumerate}	
\renewcommand{\qedsymbol}{}
\end{proof}
Once we have got  the maximum norm 
of  matrix~${A_m}$
in terms of polynomials  $U_m$, $P_m$ and $C_m$, we can get bounds $s_m^{(c)}$ for contractivity for any value of $m$ if we are able to solve the corresponding inequality 
$\norm{A_m}_\infty\le 1$. This is done in the following lemma. In Figure~\ref{Fig:normaAm} we have plot  
 $\norm{A_m(s)}_\infty$ for some values of $m$, and we have also added some contractivity bounds 
$s_m^{(c)}$.  
\begin{figure}[h!] 
\begin{picture}(0,180)(0,-10)
\put(55,-12){\includegraphics[scale=0.508]{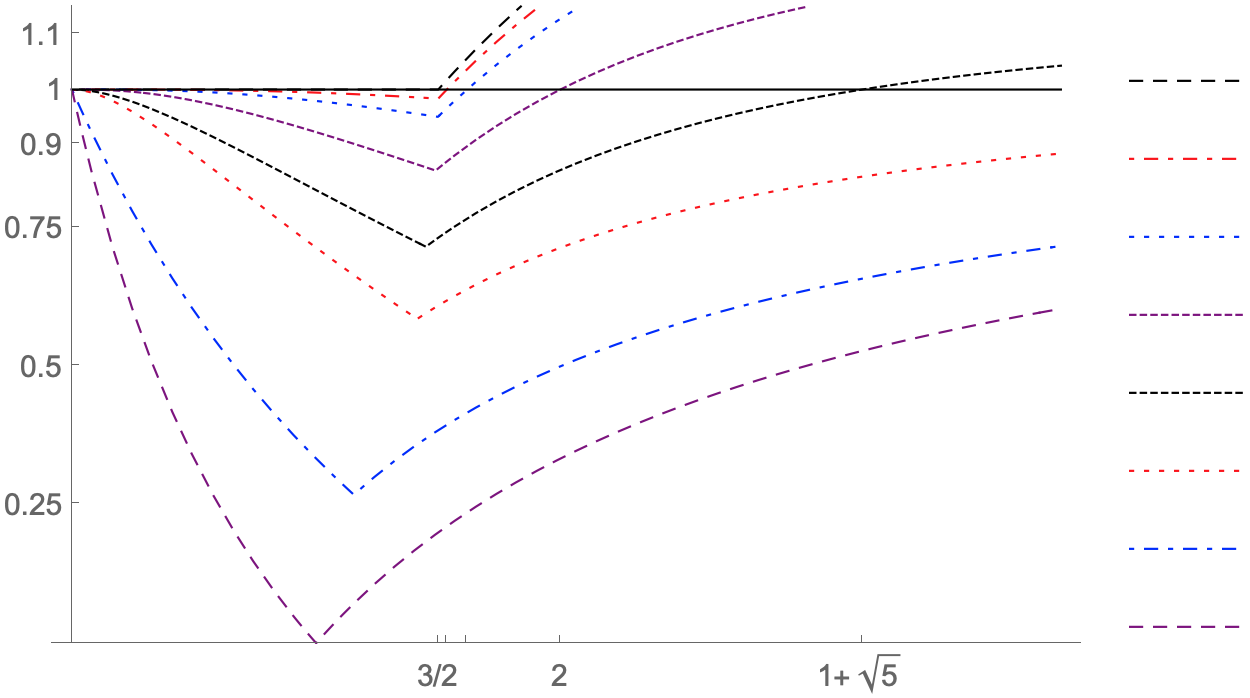}}
\put(274,10){\footnotesize${s_4^{(c)}}$}
\put(196,10){\footnotesize${s_5^{(c)}}$}
\put(173,10){\footnotesize${s_7^{(c)}}$}
\put(380,05){\footnotesize$\norm{A_1(s)}$}
\put(380,25){\footnotesize$\norm{A_2(s)}$}
\put(380,45){\footnotesize$\norm{A_3(s)}$}
\put(380,65){\footnotesize$\norm{A_4(s)}$}
\put(380,85){\footnotesize$\norm{A_5(s)}$}
\put(380,105){\footnotesize$\norm{A_7(s)}$}
\put(380,125){\footnotesize$\norm{A_9(s)}$}
\put(380,145){\footnotesize$\norm{A_{21}(s)}$}
\end{picture}
\caption{$\norm{A_m(s)}$ for different values of 
$m\in\{1,2,3,4,5,7,9,21\}$. 
When $m\in\{1,2,3\}$ it holds 
$\norm{A_m(s)}<1$, for all $s>0$. 
For $m\geq 4$, $\norm{A_m(s)}$ cuts the line $s=1$ 
at~$s_m^{(c)}$. The sequence $(s_m^{(c)})$  is  strictly monotonically decreasing  with all the terms 
in the   interval~$\big(3/2, 1+\sqrt{5}\,\big]$.}\label{Fig:normaAm}
\end{figure} 
\begin{lemma}\label{theorem_norm_inf} \mbox{}
\begin{enumerate}
     \item (Ood case) If $m$ is a natural odd number, then for the maximum norm of Crank-Nicolson matrix we have
 \[
\norm{A_m}_\infty\le 1  \  \iff \  
\frac{2\sinh\frac{(m-1)\omega}{4}\sinh\frac{(m+1)\omega}{4}}{\sinh\frac{(m+1)\omega}{2}}\le \sinh\frac{\omega}{2}
\]
where $\omega=\operatorname{arccosh}(1+1/s)$. 
In the limit, when $m\to\infty$, we get contractivity if and only if
\begin{equation}\label{limitodd}
e^{-\omega/2}\le \sinh\frac{\omega}{2}
\end{equation}
\item (Even case)
If $m$ is a natural even number, then  we have
\[
\norm{A_m}_\infty\le 1  \  \iff \  
\frac{\sinh\frac{m\omega}{2}\left(\sinh\frac{(m+2)\omega}{2}-\sinh\frac{m\omega}{2}\right)}{\sinh\frac{(m+1)\omega}{2}\sinh\frac{m\omega}{4}\sinh\frac{(m-2)\omega}{4} } 
\ge \frac{\sinh \omega}{\sinh^2\frac{\omega}{2}}
\]
In the limit, when $m\to\infty$, we get contractivity if and only if 
\begin{equation}\label{limiteven}
2\left(-1+e^{\omega}\right)\ge \frac{\sinh \omega}{\sinh^2\frac{\omega}{2}}
\end{equation}
\end{enumerate}
If $\omega>0$, inequalities \eqref{limitodd} and \eqref{limiteven} are equivalent, and they are true iff $\omega\ge \log 3$. In the variable $s$ this is equivalent to the known restriction \  $s\le 3/2$.
\end{lemma}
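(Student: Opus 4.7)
The plan is to reduce both cases of the lemma to closed-form hyperbolic expressions in the variable $\omega = \arccosh(1+1/s)$, which is well-defined for $s>0$ since then $x = 1+1/s>1$. First I would substitute $x = \cosh\omega$ into the building blocks of the norm expressions of Lemma \ref{proposition_norm_inf}: the identity $U_n(\cosh\omega) = \sinh((n+1)\omega)/\sinh\omega$ from \eqref{chebyshev22}, the equality $C_n(\cosh\omega) = 4(\cosh\omega-1)\sinh(n\omega)/\sinh\omega$ from \eqref{CtoU}, and the companion expression
\[
P_m(\cosh\omega)\,\sinh\omega \;=\; (3\cosh\omega-4)\sinh(m\omega) - \sinh\omega\,\cosh(m\omega),
\]
which follows from \eqref{PtoU} together with $\sinh((m+1)\omega)+\sinh((m-1)\omega) = 2\cosh\omega\,\sinh(m\omega)$.

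Next I would collapse the (double) sums in \eqref{maxnormodd}--\eqref{maxnormeven}. The telescoping identities $\sum_{n=1}^{k}\sinh((2n-1)\omega) = \sinh^2(k\omega)/\sinh\omega$ and $\sum_{n=1}^{k}\sinh(2n\omega) = \sinh(k\omega)\sinh((k+1)\omega)/\sinh\omega$, together with the general block formula $\sum_{n=a}^{b}\sinh(n\omega) = \sinh((a+b)\omega/2)\sinh((b-a+1)\omega/2)/\sinh(\omega/2)$, allow each inner sum to be written as a product of two hyperbolic sines. Substituting these into \eqref{maxnormodd} and \eqref{maxnormeven} and then applying the product-to-sum identity $\cosh A - \cosh B = 2\sinh((A+B)/2)\sinh((A-B)/2)$ repeatedly should reduce the contractivity condition $\norm{A_m}_\infty\le 1$ to an inequality involving only a handful of $\sinh$ factors.

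The most delicate point is handling the absolute value $|P_m + \sum_{n} C_{2n-1}|$ (odd) or $|P_m + \sum_{n} C_{2n}|$ (even). I would first verify that the quantity inside the absolute value is negative in the regime where contractivity can fail, which is consistent with the diagonal entries of $A_m$ approaching $-1$ as $s$ grows large; once this sign is pinned down, $\norm{A_m}_\infty\le 1$ becomes an equality-style constraint that factors into the forms \eqref{ineqodd_0} and \eqref{ineqeven_0} after clearing the common factor $\sinh\omega$ in the denominators. I expect this bookkeeping step to be the main obstacle, as it requires careful tracking of the multiplicities of each $C_k$ in the double sums and a sign check uniform in $m$.

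For the final claims of the lemma, the limit $m\to\infty$ is routine using $\sinh(\alpha m\omega)\sim \tfrac{1}{2}e^{\alpha m\omega}$ for $\alpha,\omega>0$, which immediately produces \eqref{limitodd} and \eqref{limiteven}. The equivalence to $\omega\ge\log 3$ is then a short algebraic check: \eqref{limitodd} multiplied by $2e^{\omega/2}$ becomes $e^\omega\ge 3$, and \eqref{limiteven}, after using $\sinh\omega = 2\sinh(\omega/2)\cosh(\omega/2)$ and $e^\omega-1 = 2e^{\omega/2}\sinh(\omega/2)$, simplifies to the same inequality. Translating back via $\cosh\omega = 1+1/s$, the condition $\cosh\omega\ge 5/3$ becomes $s\le 3/2$, recovering the classical bound.
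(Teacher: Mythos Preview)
Your proposal is correct and follows essentially the same route as the paper's proof: substitute $x=\cosh\omega$ into the expressions from Lemma~\ref{proposition_norm_inf}, split on the sign of the central diagonal entry $P_m+\sum C_{2n-1}$ (odd) or $P_m+\sum C_{2n}$ (even), collapse the resulting double sums via the closed formulas for $\sum_k\sinh(k\omega)$, $\sum_k\sinh(2k\omega)$, $\sum_k\sinh((2k{-}1)\omega)$, and then take the limit. The paper makes the sign step slightly more explicit than you do---it rewrites the diagonal entry using $P_m=C_m-U_m$, locates its unique root $s_m\in[1,3/2)$, and observes that on $(0,s_m)$ the diagonal is positive so the central-row sum of absolute values equals the plain row sum, which is strictly below~$1$; only on $[s_m,\infty)$ does the inequality become nontrivial and reduce to \eqref{ineqodd_0}--\eqref{ineqeven_0}. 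You should make sure your write-up covers both sides of the ``if and only if'' in this way, but otherwise your plan matches the paper's argument.
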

\begin{proof}\mbox{}
\begin{enumerate}
	\item Remember that, in the odd case, the norm in \eqref{maxnormodd} is obtained from the sum of the elements in the central row of $A_m$. 
	In that case,   the diagonal element in the central row   ${P_m+\sum_{n=1}^{\frac{m-1}{2}}C_{2n-1}}$  
can be written in closed form  as
\[
{P_m+\sum_{n=1}^{\frac{m-1}{2}}C_{2n-1}}=
{-U_m+\sum_{n=1}^{\frac{m+1}{2}}C_{2n-1}}=
-\frac{\sinh((m+1)\omega)}{\sinh\omega}+\frac{4(x-1)\sinh^2\frac{(m+1)\omega}{2}  }{\sinh^2\omega} \,,
\]
where we have changed $P_m=C_m-U_m$ and we have considered the angle variable
$\omega=\operatorname{arccosh}x$. 
For any value of $m$, 
the unique positive root $\omega_m$  of this diagonal element   lies in the interval 
$(\log 3,\log(2 + \sqrt{3})]$. This root can easily be obtained from the simplified equation in the variable 
$s=(\cosh\omega -1)^{-1}$
\[
{\sqrt{1+2s}}=2\tanh \frac{(1+m)\operatorname{arccosh}(1+1/s)}{2}  \,.
\]
In this variable, the unique positive root $s_m$   
lies in the interval $[1,3/2)$. Observe that the sequence of roots $(s_m)$ increasingly converges to the limit value \mbox{$s_{\infty}=3/2$}.  
When $s\in(0,s_m)$ the diagonal element
${P_m+\sum_{n=1}^{\frac{m-1}{2}}C_{2n-1}}$ is positive, and, from~\eqref{maxnormodd},    we easily obtain
$\norm{A_m}_\infty< 1$.

On the other hand, when \mbox{$s\in[s_m,\infty)$}, we have
${P_m+\sum_{n=1}^{\frac{m-1}{2}}\!\!C_{2n-1}}\!\le 0$\,, and,
from~\eqref{maxnormodd}, the inequality for contractivity is
\[
\norm{A_m}_\infty =  \tfrac{1}{U_m}
\left( {-P_m-\sum_{n=1}^{\frac{m-1}{2}}C_{2n-1}}+ 
2\sum_{i=0}^{\frac{m-3}{2}} \sum_{n=1}^{\frac{m-(2 i+1)}{2}} C_{2 n+i}  \right)\le 1\,.
\]
This is equivalent to 	 
\[
 {-\sum_{n=1}^{\frac{m-1}{2}}C_{2n-1}}+ 
2\sum_{i=0}^{\frac{m-3}{2}} \sum_{n=1}^{\frac{m-(2 i+1)}{2}} C_{2 n+i}  \le C_m\,,
\]
or
\[
 {\sum_{n=1}^{\frac{m+1}{2}}C_{2n-1}}- 
2\sum_{i=0}^{\frac{m-3}{2}} \sum_{n=1}^{\frac{m-(2 i+1)}{2}} C_{2 n+i}  \ge 0\,.
\]
As $x>1$, in the angle variable 
$\omega=\operatorname{arccosh} x>0$, we 
can write $C_m(\cosh\omega)=4(x-1)\sinh(m\omega)/\sinh\omega$. Consequently, the previous inequality is reduced to
\[
 {\sum_{n=1}^{\frac{m+1}{2}}\sinh((2n-1)\omega)}- 
2\sum_{i=0}^{\frac{m-3}{2}} \sum_{n=1}^{\frac{m-(2 i+1)}{2}} \sinh((2n+i)\omega)  \ge 0\,.
\]
If we use the closed formulas
for the expansions
$\sum_k \sinh (k\omega)$, $\sum_k \sinh (2k\omega)$, and $\sum_k \sinh ((2k-1)\omega)$, then we can write the previous inequality as
\[
\frac{4(x -1)\sinh\frac{(m+1)\omega}{2} }{\sinh^2 \omega}\left(\sinh\frac{(m+1)\omega}{2}-2\frac{\sinh\frac{(m-1)\omega}{4}\sinh\frac{(m+1)\omega}{4} }{\sinh\frac{\omega}{2}}\right)\ge 0\,.
\]
And, for $x>1$, this is true if and only if
\begin{equation}\label{ineqodd}
\frac{2\sinh\frac{(m-1)\omega}{4}\sinh\frac{(m+1)\omega}{4}}{\sinh\frac{(m+1)\omega}{2}}\le \sinh\frac{\omega}{2}\,.
\end{equation}
In this way, for any value of $m$, we get contractivity if and only if  $\omega\ge \omega_m^{(c)}$, where $\omega_m^{(c)}$ is the unique positive root of the corresponding equality equation. Finally, going back to variable $s$, we get contractivity if and only if $s\le s_m^{(c)}:= 1/(\cosh \omega_m^{(c)} -1)$.

Computing the limit in \eqref{ineqodd}, when $m\to\infty$, we get contractivity if and only if
\[
e^{-\omega/2}\le \sinh\frac{\omega}{2}\,.
\]
And this is true if and only if $\omega\ge \log 3$. This is
$x=\cosh\omega\ge 5/3$ or $s\le 3/2$.
	\item {In the even case, the norm in \eqref{maxnormeven} is obtained from the sum of the elements in any of the  two central symmetric rows.   
	The diagonal element  ${P_m+\sum_{n=1}^{\frac{m}{2}-1}C_{2n}}$  
	in any of this central rows 
can be written in closed form  as 
\[
P_m+\!\sum_{n=1}^{\frac{m}{2}-1}C_{2n}=
{-U_m+\sum_{n=1}^{\frac{m}{2}}C_{2n}}=
-\frac{\sinh((m+1)\omega)}{\sinh\omega}+\frac{4(x-1)\sinh\frac{m\omega}{2}\sinh\frac{(m+2)\omega}{2}  }{\sinh^2\omega} \,,
\]
where $\omega=\operatorname{arccosh}x$ and 
$x=1+1/s$. Again, the unique positive root $s_m$  of this central diagonal element 
lies in the interval $[1,3/2)$. For any value of $m$, this root can easily be obtained from the simplified equation 
\[
{\sqrt{1+2s}}=4\,\frac{\sinh\frac{m\omega}{2}\sinh\frac{(m+2)\omega}{2}}{\sinh((m+1)\omega)}  \,.
\]
Observe that this sequence of roots $(s_m)_m$ increasingly converges to the limit value $s_{\infty}=3/2$.  When $s\in(0,s_m)$ the diagonal element
${P_m+\sum_{n=1}^{\frac{m}{2}-1}C_{2n}}$ is positive, 
and, from~\eqref{maxnormeven}, we easily obtain
$\norm{A_m}_\infty< 1$.  On the other hand, when \mbox{$s\in[s_m,\infty)$}, 
the diagonal element  is negative, and,
from~\eqref{maxnormeven}, 
the inequality for contractivity is
\[
\norm{A_m}_\infty =  \tfrac{1}{U_m}
\left(  {-P_m-\sum_{n=1}^{\frac{m}{2}-1}C_{2n}}+ \sum_{i=1}^{\frac{m}{2}-1} \sum_{n=1+i}^{m-i} {C}_n+\sum_{n=1}^{m / 2} {C}_{2n-1}  \right)\le 1\,.
\]}
This is equivalent to
\[
  {-\sum_{n=1}^{\frac{m}{2}-1}C_{2n}}+ \sum_{i=1}^{\frac{m}{2}-1} \sum_{n=1+i}^{m-i} {C}_n+\sum_{n=1}^{m / 2} {C}_{2n-1}  \le C_m\,,
\]
or
\[
  {\sum_{n=1}^{\frac{m}{2}}C_{2n}}- \sum_{i=1}^{\frac{m}{2}-1} \sum_{n=1+i}^{m-i} {C}_n-\sum_{n=1}^{m / 2} {C}_{2n-1}  \ge 0\,.
\]
As  in the odd case, now we can use  the angle variable 
$\omega$ to reduce the previous inequality.
\[
  {\sum_{n=1}^{\frac{m}{2}}\sinh(2n\omega)}- \sum_{i=1}^{\frac{m}{2}-1} \sum_{n=1+i}^{m-i} \sinh(n\omega)-\sum_{n=1}^{m / 2} \sinh((2n-1)\omega)  \ge 0\,.
\]
Finally, the 
closed formulas    for the expansions
$\sum_k \sinh (k\omega)$, $\sum_k \sinh (2k\omega)$, and $\sum_k \sinh ((2k-1)\omega)$, allow us to reduce the inequality to
\[
\tfrac{4(x -1)}{\sinh \omega}\left(\frac{\sinh\frac{m\omega}{2}}{\sinh \omega} \left(\sinh\tfrac{(m+2)\omega}{2}-\sinh\tfrac{m\omega}{2}\right)
-\frac{\sinh\frac{(m+1)\omega}{2}\sinh\frac{m\omega}{4}\sinh\frac{(m-2)\omega}{4} }{\sinh^2\frac{\omega}{2}}\right)\ge 0\,.
\]
And, for $x>1$, this is true if and only if
\begin{equation}\label{ineqeven}
 \frac{\sinh\frac{m\omega}{2}\left(\sinh\frac{(m+2)\omega}{2}-\sinh\frac{m\omega}{2}\right)}{\sinh\frac{(m+1)\omega}{2}\sinh\frac{m\omega}{4}\sinh\frac{(m-2)\omega}{4} } 
\ge \frac{\sinh \omega}{\sinh^2\frac{\omega}{2}}\,.
\end{equation}
In this way, for any value of $m$, we get contractivity if and only if  $\omega\ge \omega_m^{(c)}$, where 
$\omega_m^{(c)}$ is the unique positive root of the corresponding equality equation 
in \eqref{ineqeven}. Finally, going back to variable $s$, we get contractivity if and only if $s\le 1/(\cosh \omega_m^{(c)} -1)$.

Computing the limit in \eqref{ineqeven}, when $m\to\infty$, we get contractivity if and only if 
\[
2\left(-1+e^{\omega}\right)\ge \frac{\sinh \omega}{\sinh^2\frac{\omega}{2}}\,.
\]
And this is true if and only if $\omega\ge \log 3$. This is
$x=\cosh\omega\ge 5/3$ or $s\le 3/2$.\qedhere
\end{enumerate}
\end{proof}

\subsubsection*{Proof of Theorem \ref{Th:con}}
\begin{enumerate}
\item For $m\in\{1,2,3\}$ the proof is straightforward.
\item For $m\in \mathbb{N}$,  $m>3$\,,  the proof is straightforward
from the previous lemma. 
The inequality for contractivity in the variable  
$s=1/({\cosh \omega -1})$ is
\[
s\le  s_m^{(c)}:=\frac{1}{\cosh \omega_m^{(c)} -1}
\]
where $\omega_m^{(c)}$ is the unique positive root of the   equation from \eqref{ineqodd}, if $m$ is odd, or from  \eqref{ineqeven}  if $m$ is even\,.
\item  It is also straightforward
from the previous lemma. In the limit, when $m\to\infty$, 
we get inequality \eqref{limitodd} from \eqref{ineqodd}, and 
inequality \eqref{limiteven} from \eqref{ineqeven}. These two inequalities \eqref{limitodd} and \eqref{limiteven} are equivalent 
if $\omega >0$, and they are true if and only if   $\omega\ge \log 3$. In the variable~$s$ this is equivalent to the known restriction \  $s\le 3/2$.\qed
\end{enumerate}

\subsection{Proof of the remaining results}
\label{subsec:rem}
\subsubsection*{Proof of Proposition \ref{proposition:p(x)}}
We divide the proof into two parts, the case   $x\in(-1,1)$ and the case   $x>1$. 
\begin{enumerate}
\item If  ${x\in(-1,1)}$, then the angular variable $\omega=\arccos x$, $\omega\in (0,\pi)$, and equality 
\eqref{PtoU},  allow us to write the polynomial $P_m$ in closed form as
\begin{align} 
P_m(\cos \omega)&=\frac{2\sin ((m-1) \omega)-4\sin (m\omega)+\sin ((m+1)\omega)}{\sin \omega}  \label{Pmdef}   \\
\intertext{If we convert  all the angles  in the numerator  
to the angle $m\omega$, we can write}
& = \frac{(3 \cos\omega -4)\sin (m\omega)}{\sin \omega}-   \cos (m\omega) \,,\label{greatexpression}
\end{align}
Then, from \eqref{greatexpression}, we get that the roots of $P_m(\cos \omega)$ in $(0,\pi)$ are 
the roots of 
the equation
\begin{equation}\label{rootstheta}
\tan(m\omega)=\frac{\sin\omega}{3 \cos\omega -4}\,,\quad 
  \omega\in (0,\pi)\,.
\end{equation}
The function on the right hand side, $f(\omega):=\sin\omega/(3 \cos\omega -4)$,
 is continuous and bounded in the interval $[0,\pi]$. It is decreasing in the interval  
 $(0,2 \arctan(1/\sqrt{7}))$ and increasing in the interval 
  $(2 \arctan(1/\sqrt{7}),\pi)$. Its maximum  value 
  \mbox{$f(0)=f(\pi)=0$} is obtained in the boundary, while the minimum value
  is $f(2 \arctan(1/\sqrt{7}))=-1/\sqrt{7}$~(see Figure \ref{fig_roots}).
\begin{figure}[ht] 
\begin{picture}(0,110)(0,0)%
\put(100,-5){\includegraphics[scale=0.28]{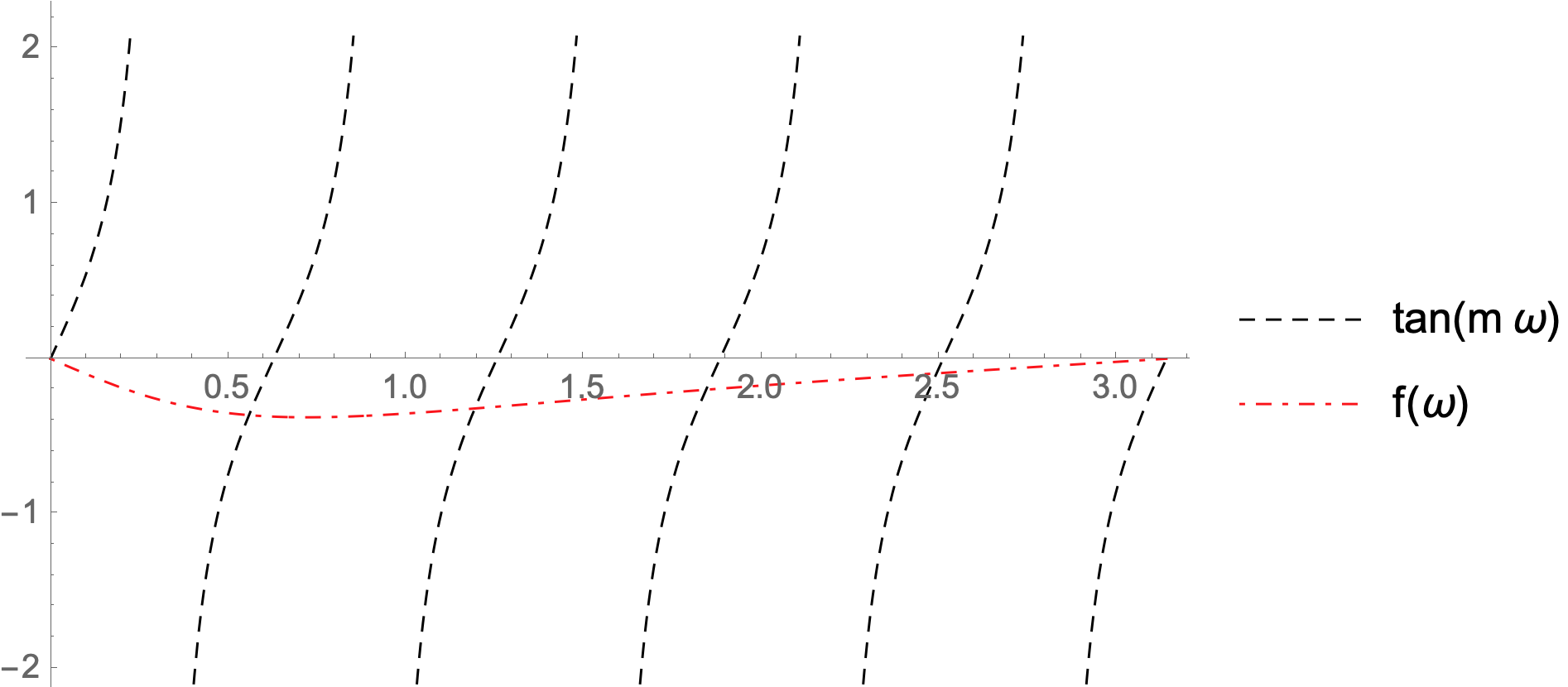}}%
\end{picture} 
\caption{Roots $0<\omega^m_{m-1}< \omega^m_{m-2}<\cdots <\omega^m_{1}<\pi\,,$ of equation \eqref{rootstheta}}
\label{fig_roots}
\end{figure}%
Thus, we can affirm that $\tan(m\omega)$ \ meets $m-1$ times the function $f(\omega)$,  and consequently there are $m-1$ roots $0<\omega^m_{m-1}< \omega^m_{m-2}<\cdots <\omega^m_{1}<\pi\,,$  in the interval~$(0,\pi)$. 
Now, going back to the variable $x=\cos\omega$, we can affirm that the polynomial $P_m(x)$ has $m-1$ roots
$x^m_{i}=\cos\omega^m_{i}$
 in the interval $(-1,1)$.
As the cosine function  is decreasing in the interval $(0,\pi)$, we can write the $m-1$ roots of $P_m(x)$ as 
\begin{equation*} 
 -1<x^m_{1}<x^m_{2}<\cdots <x^m_{m-1}<1\,.
\end{equation*}
\item  If $x>1$,  we consider the variable 
$\omega=\arccosh x$, $\omega\in (0,\infty)$. Again, with the help of equality \eqref{PtoU}, we can write the polynomial $P_m$ in closed form as
\begin{align} 
P_m(\cosh (\omega))&=\frac{2\sinh ((m-1) \omega)-4\sinh (m\omega)+\sinh ((m+1)\omega)}{\sinh \omega}\label{Pmdef2}\\
& = \frac{(3 \cosh\omega -4)\sinh (m\omega)}{\sinh \omega}-   \cosh (m\omega)\,,\label{greatexpressionh}
\end{align} 
where, as in the previous case, we have rewritten  the numerator 
in the terms of the angle $m\omega$.  
Then, from \eqref{greatexpressionh}, 
the roots of $P_m(\cosh (\omega))$ in $(0,\infty)$ are 
the roots  of the equation
\begin{equation}\label{greatexpressionh2}
\coth(m\omega)=\frac{3 \cosh\omega -4}{\sinh\omega}\,,\qquad   \omega\in (0,\infty)\,.
\end{equation}
The function on the right, $g(\omega):=(3 \cosh\omega -4)/\sinh\omega$, 
 is continuous and increasing in the interval $(0,\infty)$ 
 to the limit value of 3.  
 On the left, for any value of $m$, the function  
 $\coth(m\omega)$ is continuous and decreasing to the limit value of 1 (see Figure~\ref{fig_roots2}). 
  Thus, we can affirm that, for any value of $m$, equation \eqref{greatexpressionh2} has a unique root $\omega_{m}$  in the interval $(0,\infty)$\,. 
Now, going back to the variable $x=\cosh\omega$, we can affirm that the polynomial $P_m(x)$ has a unique root
$x_{m}=\cosh\omega_{m}$
 in the interval $(1,\infty)$.   
\begin{figure}[h!]
\begin{picture}(0,130)(0,0)%
\put(100,-5){\includegraphics[scale=0.38]{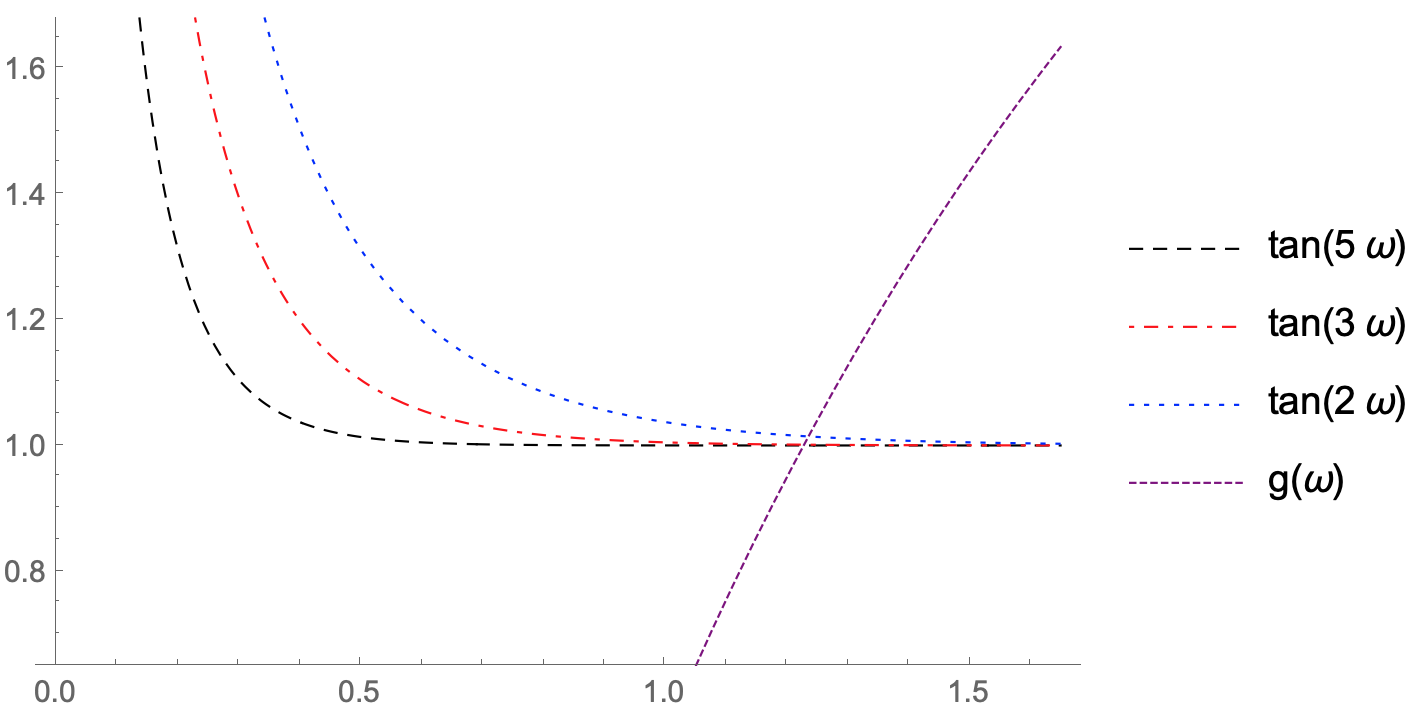}}%
\end{picture}   
\caption{Left hand side and right hand side of   equation \eqref{greatexpressionh2}}
\label{fig_roots2}
\end{figure}

In the limit, when $m$ tends to infinity, for any value of  
$\omega>0$, we have $\coth (m\omega){\searrow} 1$. Consequently, from \eqref{greatexpressionh2}, the sequence of  roots $(\omega_m)$ decreasingly converges to the limit value $\omega_\infty=\log(2 + \sqrt{2})$, this is the positive  solution of the limit  equation  $3 \cosh\omega -4=\sinh\omega$. Now, in the variable $x=\cosh\omega$, we can affirm that each polynomial $P_m(x)$ has a positive root
$x_m=\cosh\omega_m$
 in the interval $(1,\infty)$.  
 As the cosh function  is increasing in the interval $(0,\infty)$,   the sequence of  roots $(x_m)$ decreasingly converges to the limit value $x_\infty=(6 + \sqrt{2})/4$.
\[
x_\infty=\cosh\omega_\infty=\cosh\log(2 + \sqrt{2})=(6 + \sqrt{2})/4\approx 1.85355\,.    
\]  
\end{enumerate}
\vspace{-0.95cm}
\qed

\nocite{liu2007unified,
wang2017root,szaboqualitative,qi2019some}

\end{document}